\newtheorem{thm}{Theorem}[section]
\newtheorem{lem}{Lemma}[section]
\newtheorem{rem}{Remark}[section]
\newcommand{\beqa}{\begin{eqnarray}}
\newcommand{\eeqa}{\end{eqnarray}}
\newcommand{\beq}{\begin{equation}}
\newcommand{\eeq}{\end{equation}}
\begin{document}

\begin{frontmatter}



\title{ Regularized finite difference methods for the logarithmic Klein-Gordon equation }


\author[nudt,nus]{Jingye Yan}
\ead{yanjingye0205@163.com}
\author[nudt]{Hong Zhang}
\ead{zhanghnudt@163.com}
\author[nudt]{Xu Qian\corref{cor1}}
\ead{qianxu@nudt.edu.cn}
\author[nudt,hpc]{Songhe Song}
\ead{shsong@nudt.edu.cn}

\cortext[cor1]{Corresponding author.}

\address[nudt]{Department of Mathematics, College of Arts and Sciences, National University of Defense Technology, Changsha 410073, China}
\address[nus]{Department of Mathematics, National University of Singapore, Singapore 119076}
\address[hpc]{State Key Laboratory of High Performance Computing, National University of Defense Technology, Changsha 410073, China}

\begin{abstract}
We propose and analyze two regularized finite difference methods for the logarithmic Klein-Gordon equation (LogKGE). Due to the blowup phenomena caused by the logarithmic nonlinearity of the LogKGE, it is difficult to construct numerical schemes and establish their error bounds. In order to avoid singularity, we present a regularized logarithmic Klein-Gordon equation (RLogKGE) with a small regularized parameter $0<\varepsilon\ll1$. Besides, two finite difference methods are adopted to solve the regularized logarithmic Klein-Gordon equation (RLogKGE) and rigorous error bounds are estimated in terms of the mesh size $h$, time step $\tau$, and the small regularized parameter $\varepsilon$. Finally, numerical experiments are carried out to verify our error estimates of the two numerical methods and the convergence results from the LogKGE to the RLogKGE with the linear convergence order $O(\varepsilon)$.
\end{abstract}

\begin{keyword}
logarithmic Klein-Gordon equation; regularized logarithmic Klein-Gordon equation; finite difference method; error estimate; convergence order
\end{keyword}

\end{frontmatter}


\section{Introduction}

The logarithmic  Klein-Gordon equation (LogKGE) known as the relativistic version of the logarithmic Schr\"{o}dinger equation \cite{bartkowski2008one} has been introduced in the quantum field theory by Rosen \cite{rosen1969dilatation} and has the form
\begin{align}\label{LogKGE}
\left\{
\begin{aligned}
&u_{tt}(\mathbf{x},t)-\Delta u(\mathbf{x},t)+u(\mathbf{x},t)+\lambda u(\mathbf{x},t)\ln(|u(\mathbf{x},t)|^{2})=0, ~~\mathbf{x}\in\mathbb{R}^{d}, t>0,\\
&u(\mathbf{x},0)=\phi(\mathbf{x}),~~~\partial_{t}u(\mathbf{x},0)=\gamma(\mathbf{x}),~~~\mathbf{x}\in\mathbb{R}^{d},
\end{aligned}
\right.
\end{align}
where $\mathbf{x}=(x_{1},\ldots,x_{d})^{\mathrm{T}}\in \mathbb{R}^{d}, (d =1, 2, 3)$ is the spatial coordinate, $t$ is time, $u:=u(\mathbf{x},t)$ is a real-valued scalar field, the parameter $\lambda$ measures the force of the nonlinear interaction.
 This kind of nonlinearity frequently appears in inflation cosmology and supersymmetric field theories \cite{rosen1969dilatation,enqvist1998q,linde1992strings}.
 The LogKGE (\ref{LogKGE}) has been used to describe the spinless particle \cite{sakurai1967advanced} in optics, nuclear physics and geophysics \cite{buljan2003incoherent,de2003logarithmic,hefter1985application,krolikowski2000unified}. Assume that $u(\cdot,t)\in H^{1}(\mathbb{R}^{d})$ and $\partial_{t}u(\cdot,t)\in L^{2}(\mathbb{R}^{d})$,
the LogKGE (\ref{LogKGE}) admits the energy conservation law \cite{masmoudi2002nonlinear,machihara2001nonrelativistic}, which is defined as:
\begin{align}
E(t) =\int_{\Omega}\left[ (u_{t}(\mathbf{x},t))^{2}+(\nabla u(\mathbf{x},t))^{2}+(1-\lambda)u^{2}(\mathbf{x},t)+\lambda u^{2}(\mathbf{x},t) \ln \left(|u(\mathbf{x},t)|^{2} \right) \right] \mathrm{d} x \equiv E(0). \label{energy}
\end{align}

The Klein-Gordon equation with logarithmic potentials posses some special analytical solutions in quantum mechanics \cite{maslov1990pulsons,bialynicki1979gaussons,koutvitsky2006instability}, and the existence of classical solutions and weak solutions have been investigated in \cite{bartkowski2008one,gorka2009logarithmic}. In the paper \cite{bialynicki1979gaussons}, the author studies the solutions named Gaussons which represent solutions of Gaussian shape  \cite{wazwaz2016gaussian}. Besides, the interaction of Gaussons has been introduced in \cite{makhankov1981interaction}. For the nonlinear Klein-Gordon equation (NKGE) and the oscillatory NKGE,  various analysis and numerical results have been represented in literature.
Along the mathematical front, the derivation, Cauchy problem, well-posedness and dynamical properties have been proposed in \cite{bainov1995nonexistence,brenner1981global,ibrahim2006global,kosecki1992unit,simon1993cauchy} and the references therein.
Along the numerical aspects, a surge of efficient and accurate numerical methods have been proposed and analyzed for the nonlinear Klein-Gordon equation (NKGE) and the oscillatory NKGE in the literature. For example, the standard finite difference time domain (FDTD) methods including energy conservative /semi-implicit /explicit finite difference time domain methods \cite{bao2012analysis,chang1991conservative,duncan1997sympletic,zhang2005convergence,bao2019long}, multiscale time integrator Fourier pseudospectral (MWI-FP) method \cite{bao2014uniformly}, finite element method \cite{cao1993fourier}, exponential wave integrator Fourier pseudospectral (EWI-FP) method \cite{bao2012analysis,bao2013exponential}, asymptotic preserving (AP) \cite{faou2014asymptotic} method, ect.  For numerical comparisons of different numerical methods of the NKGE and the oscillatory NKGE, we refer to \cite{bao2012analysis,jimenez1990analysis,pascual1995numerical,bao2019comparison}. However, due to the singularity of the logarithmic nonlinearity at the origin, these methods can not be applied to the LogKGE (\ref{LogKGE}) equation directly.

In order to avoid blowup of the LogKGE (\ref{LogKGE}), i.e., $\log |u|\rightarrow -\infty, |u|\rightarrow 0$, we consider a regularized logarithmic  Klein-Gordon equation (RLogKGE) with a small regularized parameter $0<\varepsilon \ll 1$,
\begin{align}\label{RLogKGE}
\left\{
\begin{aligned}
&u^{\varepsilon}_{tt}(\mathbf{x},t)-\Delta u^{\varepsilon}(\mathbf{x},t)+u^{\varepsilon}(\mathbf{x},t)+\lambda u^{\varepsilon}(\mathbf{x},t)\ln\left(\varepsilon^{2}+ \left( u^{\varepsilon}(\mathbf{x},t) \right)^{2} \right)=0, ~~\mathbf{x}\in\mathbb{R}^{d}, t>0,\\
&u^{\varepsilon}(\mathbf{x},0)=\phi(\mathbf{x}),~~~\partial_{t}u^{\varepsilon}(\mathbf{x},0)=\gamma(\mathbf{x}),~~~\mathbf{x}\in\mathbb{R}^{d}.
\end{aligned}
\right.
\end{align}
The above RLogKGE (\ref{RLogKGE}) is time symmetric or time reversible, i.e., they are invarient if interchanging $n+1\leftrightarrow n-1$ and $\tau \leftrightarrow -\tau$.
\begin{rem}
The Cauchy problem of the LogKGE (\ref{LogKGE}) and the RLogKGE (\ref{RLogKGE}), the convergence estimate between the regularized model (\ref{RLogKGE}) and the LogKGE (\ref{LogKGE}) will be represented in another paper.
\end{rem}
\begin{thm}
Assume $u^{\varepsilon}(\cdot,t)\in H^{1}(\mathbb{R}^{d})$ and $\partial_{t}u^{\varepsilon}(\cdot,t)\in L^{2}(\mathbb{R}^{d})$,
the RLogKGE (\ref{RLogKGE}) conserves energy conservation law
, which is defined as:
\begin{align}
E^{\varepsilon}(t) =\int_{\Omega}\left[ (u^{\varepsilon}_{t}(\mathbf{x},t))^{2}+(\nabla u^{\varepsilon}(\mathbf{x},t))^{2}+(u^{\varepsilon}(\mathbf{x},t))^{2}+\lambda F_{\varepsilon} \left( (u^{\varepsilon}(\mathbf{x},t))^{2} \right) \right] \mathrm{d} x \equiv E^{\varepsilon}(0), \label{energyr}
\end{align}
where $F_{\varepsilon}(\rho)=\int_{0}^{\rho} \ln (\varepsilon^{2}+s) \mathrm{d} s= \rho\ln (\varepsilon^{2}+\rho)+\varepsilon^{2}\ln (1+\frac{\rho}{\varepsilon^{2}})- \rho,   \quad \rho=(u^{\varepsilon}(\mathbf{x},t))^{2}.$
\end{thm}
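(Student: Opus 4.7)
The plan is to derive the conservation law by the standard energy method: multiply the RLogKGE \eqref{RLogKGE} by $u^{\varepsilon}_t$, integrate over $\Omega$ (or $\mathbb{R}^d$), and rewrite every resulting integrand as a time derivative so that the left-hand side becomes $\tfrac{1}{2}\frac{d}{dt}E^{\varepsilon}(t)$. Since $u^{\varepsilon}(\cdot,t)\in H^1$ and $\partial_t u^{\varepsilon}(\cdot,t)\in L^2$, each resulting integral is well-defined, and integration by parts against the Laplacian will produce no boundary contribution (either because of periodic/compactly supported data or by decay at infinity in $\mathbb{R}^d$).

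Concretely, I would treat the four terms separately. The first three are routine: $u^{\varepsilon}_{tt} u^{\varepsilon}_t = \tfrac{1}{2}\partial_t (u^{\varepsilon}_t)^2$, integration by parts gives $-\int \Delta u^{\varepsilon}\, u^{\varepsilon}_t\,dx = \tfrac{1}{2}\partial_t \int |\nabla u^{\varepsilon}|^2 \,dx$, and $u^{\varepsilon} u^{\varepsilon}_t = \tfrac{1}{2}\partial_t (u^{\varepsilon})^2$. The heart of the argument is the nonlinear term, for which I would set $\rho=(u^{\varepsilon})^2$ and use the chain rule
\begin{equation*}
\frac{d}{dt}F_{\varepsilon}\!\left((u^{\varepsilon})^2\right) = F'_{\varepsilon}(\rho)\cdot 2 u^{\varepsilon} u^{\varepsilon}_t = 2\,u^{\varepsilon} u^{\varepsilon}_t\,\ln\!\left(\varepsilon^2+(u^{\varepsilon})^2\right),
\end{equation*}
since $F'_{\varepsilon}(\rho)=\ln(\varepsilon^2+\rho)$ follows directly from the definition $F_{\varepsilon}(\rho)=\int_{0}^{\rho}\ln(\varepsilon^2+s)\,ds$. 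One then verifies the closed form stated for $F_{\varepsilon}$ by a single integration by parts in $s$, which I would record as a preliminary identity. Multiplying by $\lambda$ shows that $\lambda u^{\varepsilon} u^{\varepsilon}_t \ln(\varepsilon^2+(u^{\varepsilon})^2) = \tfrac{\lambda}{2}\partial_t F_{\varepsilon}((u^{\varepsilon})^2)$.

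Adding the four contributions, multiplying by $2$, and pulling $\frac{d}{dt}$ outside the spatial integral (justified by the assumed regularity and dominated convergence) yields $\frac{d}{dt}E^{\varepsilon}(t)=0$, and integrating in $t$ gives $E^{\varepsilon}(t)\equiv E^{\varepsilon}(0)$.

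The only non-routine point is ensuring the nonlinear term $\lambda u^{\varepsilon} u^{\varepsilon}_t\ln(\varepsilon^2+(u^{\varepsilon})^2)$ is integrable and that differentiation under the integral sign is legitimate; here the regularization $\varepsilon>0$ is essential, because it bounds $|\ln(\varepsilon^2+(u^{\varepsilon})^2)|$ by $C(\varepsilon)(1+(u^{\varepsilon})^2)$ uniformly on compacts, so the $H^1\times L^2$ regularity suffices. This would be the one place I would be careful, together with checking that the decay/boundary conditions justify dropping the boundary term in the Green's identity; everything else is bookkeeping.
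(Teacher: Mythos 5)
Your proposal is correct and is essentially the paper's own argument run in the opposite direction: the paper differentiates $E^{\varepsilon}(t)$, applies the chain rule with $F'_{\varepsilon}(\rho)=\ln(\varepsilon^{2}+\rho)$, and factors out $2u^{\varepsilon}_{t}$ times the PDE, which is exactly your ``multiply by $u^{\varepsilon}_{t}$ and recognize total time derivatives'' scheme. Your extra remarks on integrability of the logarithmic term and differentiation under the integral go beyond what the paper records, but they refine rather than change the approach.
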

\begin{proof}
\begin{align}
\begin{split}
\frac{d}{dt}E^{\varepsilon}(t)&=2\int_{\Omega}\left[ u^{\varepsilon}_{t}\cdot u^{\varepsilon}_{tt}+\nabla u^{\varepsilon}\cdot \nabla u^{\varepsilon}_{t}+u^{\varepsilon}u^{\varepsilon}_{t}+\lambda F^{'}_{\varepsilon}\left((u^{\varepsilon})^{2}\right)\cdot u^{\varepsilon}\cdot u^{\varepsilon}_{t}\right](\mathbf{x},t) \mathrm{d} x\\
&=2\int_{\Omega} \left[ u^{\varepsilon}_{t} \left( u^{\varepsilon}_{tt}-\Delta u^{\varepsilon}+u^{\varepsilon}+\lambda u^{\varepsilon}\ln\left(\varepsilon^{2}+( u^{\varepsilon})^{2} \right) \right) \right] (\mathbf{x},t) \mathrm{d} x=0.
\end{split}
\end{align}
This ends the proof.
\end{proof}

The main purpose of this work is to analyze two FDTD schemes for the RLogKGE (\ref{RLogKGE}) and study the efficiency, accuracy between the LogKGE (\ref{LogKGE}) and the RLogKGE (\ref{RLogKGE}) as well as their numerical simulations.

The rest of this paper is organized as follows. In Section 2, a semi-implicit and an explicit FDTD schemes are proposed for the RLogKGE (\ref{RLogKGE}). Besides, we analyze the stability and solvability of the two schemes. The details of error analysis are established in Section 3. Section 4 is devoted to verifying our error etimates using the numerical experiments. At last, some concluding remarks are drawn in Section 5. Throughout this paper, we denote $p\lesssim q$ to represent that there exists a generic
constant $C$ which is independent of $\tau,h,\varepsilon$, such that $|p|\leq Cq$.
\section{FDTD methods and their stability}
In this section, we construct two FDTD schemes to approximate the RLogKGE (\ref{RLogKGE}) and study their stability, solvability and analyze their error estimates. For simplicity of notations, we set $\lambda=1$ and only make analysis and construct numerical schemes in one dimensional space ($d=1$) for the RLogKGE (\ref{RLogKGE}). When $d=1$, we truncate the RLogKGE (\ref{RLogKGE}) with periodic boundary conditions
\begin{align}\label{RLogKGE1}
\left\{
\begin{aligned}
&u^{\varepsilon}_{tt}(x,t)-\Delta u^{\varepsilon}(x,t)+u^{\varepsilon}(x,t)+ u^{\varepsilon}(x,t)\ln\left(\varepsilon^{2}+ \left( u^{\varepsilon}(x,t) \right)^{2} \right)=0, ~~x\in\Omega=(a,b), ~~t>0,\\
&u^{\varepsilon}(x,0)=\phi(x),~~~\partial_{t}u^{\varepsilon}(x,0)=\gamma(x),~~~x\in\overline{\Omega}=[a,b].
\end{aligned}
\right.
\end{align}
\subsection{FDTD methods}
Choose time step $\tau:=\Delta t$ and time steps $t_{n}:=n\tau, n=0,1,2,\ldots;$  let the mesh size $h:=\frac{b-a}{N}$ with $N$ being a positive integer and denote the grid points as $x_{j}:=a+jh,j=0,1,\ldots,N$.
Define the index sets as:
\begin{align}
\mathcal{T}_{N}=\left\{j|j=0,1,2,\ldots,N-1\right\},\quad \mathcal{T}_{N}^{0}=\left\{j|j=0,1,2,\ldots,N\right\}.
\end{align}
Assume $u^{\varepsilon,n}_{j}, u^{n}_{j}$ are the approximations of the exact solution $u^{\varepsilon}(x_{j},t_{n})$ and $ u(x_{j},t_{n})$, $j\in \mathcal{T}_{N}^{0}$ and $n\geq 0$. Define $u^{\varepsilon,n}=(u^{\varepsilon,n}_{0},u^{\varepsilon,n}_{1},\ldots,u^{\varepsilon,n}_{N})^{\mathrm{T}}, u^{n}=(u^{n}_{0},u^{n}_{1},\ldots,u^{n}_{N})^{\mathrm{T}} \in \mathbb{C}^{N+1} $ as the numerical solutions vector at time $t=t_{n}$. The followings are the finite difference operators:
\begin{align*}
&\delta^{+}_{t}u^{n}_{j}=\frac{u^{n+1}_{j}-u^{n}_{j}}{\tau},~~~~~\delta_{t}^{-}u^{n}_{j}=\frac{u^{n}_{j}-u^{n-1}_{j}}{\tau},~~~~\delta^{2}_{t}u^{n}_{j}=\frac{u^{n+1}_{j}-2u^{n}_{j}+u^{n-1}_{j}}{\tau^{2}},\\
&\delta^{+}_{x}u^{n}_{j}=\frac{u^{n}_{j+1}-u^{n}_{j}}{h},~~~~~~\delta^{-}_{x}u^{n}_{j}=\frac{u^{n}_{j}-u^{n}_{j-1}}{h},~~~~~\delta^{2}_{x}u^{n}_{j}=\frac{u^{n}_{j+1}-2u^{n}_{j}+u^{n}_{j-1}}{h^{2}}.\\
\end{align*}
We denote a space of grid functions
\begin{align}
X_{N}=\left\{ u| u=(u_{0},u_{1},u_{2},\ldots,u_{N})^{\mathrm{T}},u_{0}=u_{N},u_{-1}=u_{N-1}\right\}\subseteq \mathbb{C}^{N+1}.
\end{align}
We define the standard discrete $l^{2}$, semi-$H^{1}$ and $l^{\infty}$ norms and inner product over $X_{N}$ as follows
\begin{align}
\|u\|_{l^{2}}^{2}=(u,u)=h\sum \limits^{N-1}_{j=0}|u_{j}|^{2},~~\|\delta_{x}^{+}u\|^{2}_{l^{2}}=h\sum \limits^{N-1}_{j=0}|\delta_{x}^{+}u_{j}|^{2},~~\|u\|_{l^{\infty}}=\sup\limits_{0\leq j\leq N-1}|u_{j}|,~~(u,v)=h\sum \limits^{N-1}_{j=0}u_{j}v_{j},
\end{align}
where $u,v\in X_{N}$, and $(\delta^{2}_{x}u,v)=-(\delta^{+}_{x}u,\delta^{+}_{x}v)=(u,\delta^{2}_{x}v)$.
In the following, we introduce two frequently used FDTD methods for the RLogKGE (\ref{RLogKGE}):\bigskip

$\mathbf{I}$. Semi-implicit finite difference (SIFD) scheme
\begin{align}\label{SIFD2}
\delta_{t}^{2}u^{\varepsilon,n}_{j}-\frac{1}{2}\delta_{x}^{2} (u^{\varepsilon,n+1}_{j}+u^{\varepsilon,n-1}_{j})+\frac{1}{2} (u^{\varepsilon,n+1}_{j}+u^{\varepsilon,n-1}_{j})+u^{\varepsilon,n}_{j}f_{\varepsilon}\left( (u^{\varepsilon,n}_{j})^{2} \right)=0,~~n\geq1;
\end{align}

$\mathbf{II}$. Explicit finite difference (EFD) scheme
\begin{align}\label{EFD}
\delta_{t}^{2}u^{\varepsilon,n}_{j}-\delta_{x}^{2} u^{\varepsilon,n}_{j}+u^{\varepsilon,n}_{j}+ u^{\varepsilon,n}_{j}f_{\varepsilon}\left( (u^{\varepsilon,n}_{j})^{2} \right)=0,~~n\geq1;
\end{align}
where, $f_{\varepsilon}(\rho)=\ln (\varepsilon^{2}+\rho)$.
 The initial and boundary conditions are discretized as
\begin{align}\label{initialvalue1}
u^{\varepsilon,n+1}_{0}=u^{\varepsilon,n+1}_{N},u^{\varepsilon,n+1}_{-1}=u^{\varepsilon,n+1}_{N-1},~~n\geq0,~~u^{\varepsilon,0}_{j}=\phi(x_{j}),~~j\in \mathcal{T}_{N}^{0}.
\end{align}
Using the Taylor expansion we can get the first step solution $u_{j}^{\varepsilon,1}$,
\begin{align}\label{initialvalue}
u_{j}^{\varepsilon,1}=\phi(x_{j})+\tau\gamma(x_{j})+\frac{\tau^{2}}{2}\left[\delta^{2}_{x}\phi(x_{j})-\phi(x_{j})- \phi(x_{j})\ln(\varepsilon^{2}+(\phi(x_{j}))^{2}) \right], ~~j\in \mathcal{T}_{N}^{0}.
\end{align}
It is easy to prove that the above FDTD schemes are all time symmetric or time reversible.
\subsection{Stability analysis}
Let $0<T<T_{\max}$ with $T_{\max}$ being the maximum existence time. Define
\begin{align}
\sigma_{\max}:=\max\{|\ln(\varepsilon^{2})|,\left| \ln(\varepsilon^{2}+\|u^{\varepsilon,n}\|^{2}_{l^{\infty}}) \right|\},~~0\leq n\leq\frac{T}{\tau}-1.
\end{align}
According to the von Neumann linear stability analysis, we can get the following stability results for the FDTD schemes.
\begin{thm}\label{stability}
For the above FDTD schemes applied to the RLogKGE (\ref{RLogKGE}) up to $t=T$, we have:

(i)~When $-1\leq\sigma_{\max}\leq 1$, the SIFD scheme (\ref{SIFD2}) is unconditionally stable; and when $\sigma_{\max}> 1$, it is conditionally stable under the stability condition
\begin{align}\label{sifd2stability}
\tau\leq \frac{2}{\sqrt{\sigma_{\max}-1}}.
\end{align}

(ii)~The EFD scheme (\ref{EFD}) is conditionally stable under the stability condition
\begin{align}\label{efdstability}
\tau\leq \frac{2h}{\sqrt{(\sigma_{\max}+1)h^{2}+4}}.
\end{align}
\end{thm}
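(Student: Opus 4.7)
My plan is to carry out a standard von Neumann linear stability analysis, which requires first freezing the nonlinear coefficient. Since $u^{\varepsilon,n}_{j} f_{\varepsilon}((u^{\varepsilon,n}_{j})^{2}) = \sigma \, u^{\varepsilon,n}_{j}$ with $\sigma := \ln(\varepsilon^{2}+(u^{\varepsilon,n}_{j})^{2})$, and since $|\sigma|\leq \sigma_{\max}$ by the definition of $\sigma_{\max}$, I will replace the nonlinearity by a constant coefficient $\sigma \in [-\sigma_{\max},\sigma_{\max}]$ and derive the stability constraints uniformly over this range.

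Next I would insert the ansatz $u^{\varepsilon,n}_{j}=\xi^{n}e^{i\mu(x_{j}-a)}$ (with $\mu h\in[-\pi,\pi]$) into each scheme. Using $\delta_{x}^{2}e^{i\mu h j}=-s\,e^{i\mu h j}$ where $s = \tfrac{4}{h^{2}}\sin^{2}(\mu h/2)\in[0,4/h^{2}]$, and clearing the common factor $\xi^{n-1}$, both schemes produce a characteristic quadratic of the palindromic form $A\xi^{2}+B\xi+A=0$. For the SIFD scheme (\ref{SIFD2}) I will get $A=1+\tfrac{\tau^{2}}{2}(s+1)$ and $B=\sigma\tau^{2}-2$; for the EFD scheme (\ref{EFD}) I will get $A=1$ and $B=(s+1+\sigma)\tau^{2}-2$. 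Because the constant term equals the leading coefficient, the two roots satisfy $\xi_{1}\xi_{2}=1$, so $|\xi_{1}|=|\xi_{2}|=1$ is equivalent to the discriminant being non-positive, i.e. $|B|\leq 2A$.

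The remaining work is to translate $|B|\leq 2A$ into inequalities on $\tau,h,\sigma$ that must hold uniformly in $s\in[0,4/h^{2}]$ and in $\sigma\in[-\sigma_{\max},\sigma_{\max}]$. For SIFD, the lower inequality $B\geq-2A$ reduces to $\sigma\geq -1-s$, which is automatic for all admissible $s$ once $\sigma_{\max}\leq 1$; the upper inequality $B\leq 2A$ reduces to $\sigma\leq 1+s+\tfrac{4}{\tau^{2}}$, whose worst case $s=0$ yields $\tau^{2}\leq 4/(\sigma_{\max}-1)$ when $\sigma_{\max}>1$, matching (\ref{sifd2stability}). For EFD, $B\geq -2A$ again forces only $\sigma\geq-1-s$, while $B\leq 2A$ gives $(s+1+\sigma)\tau^{2}\leq 4$, and the worst case is now $s=4/h^{2}$ together with $\sigma=\sigma_{\max}$, which rearranges exactly to (\ref{efdstability}).

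The main subtlety I anticipate is the sign of $\sigma$: since $f_{\varepsilon}$ takes both signs (very negative when $\varepsilon$ is tiny, positive when $|u|$ is large), I must keep both $\sigma>0$ and $\sigma<0$ in mind when identifying the worst-case wavenumber $s$ and verifying that the lower-side inequality $B\geq -2A$ imposes no extra restriction beyond $\sigma_{\max}\geq-1$ (which follows trivially from $\sigma_{\max}\geq 0$). Everything else is routine algebra with the palindromic quadratic.
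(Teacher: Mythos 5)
Your route coincides with the paper's: freeze the logarithmic coefficient, insert a single Fourier mode with $\delta_x^2\mapsto -s$, $s\in[0,4/h^2]$, reduce each scheme to a reciprocal quadratic $A\xi^2+B\xi+A=0$ (the paper writes it as $\xi_l^2-2\theta_l\xi_l+1=0$ with $\theta_l=-B/(2A)$), and translate $|\xi|\le 1$ into $|B|\le 2A$ optimized over the wavenumber. Your values of $A$ and $B$ for both schemes are correct, and your upper-side worst cases ($s=0$ for SIFD, $s=4/h^2$ for EFD, $\sigma=\sigma_{\max}$) reproduce (\ref{sifd2stability}) and (\ref{efdstability}) exactly as the paper does. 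However, your final claim about the lower-side inequality is wrong, and the error sits precisely in the regime the theorem is about. For both schemes the inequality $B\ge -2A$ at the zero mode $s=0$ reads $\sigma\ge -1$; demanding this uniformly over your admissible range $\sigma\in[-\sigma_{\max},\sigma_{\max}]$ forces $\sigma_{\max}\le 1$, not merely $\sigma_{\max}\ge -1$. You have conflated the bound $|\sigma|\le\sigma_{\max}$ with a one-sided bound on $\sigma$: "$\sigma_{\max}\ge -1$ trivially" does not give "$\sigma\ge -1$ for all admissible $\sigma$." Concretely, if $\sigma_{\max}>1$ and $\sigma=-\sigma_{\max}<-1$ (which is the realistic case here, since $\sigma_{\max}\ge|\ln\varepsilon^2|\gg 1$ and $\ln(\varepsilon^2+u^2)$ is very negative wherever $u$ is small), then at $s=0$ one has $B<-2A<0$, the discriminant is positive, the two real roots have product $1$ and sum $-B/A>2$, so one root has modulus strictly greater than $1$ for \emph{every} $\tau>0$. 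No time-step restriction of the form (\ref{sifd2stability}) or (\ref{efdstability}) can repair this, so within your uniform-in-$\sigma$ framework the conditional-stability claims for $\sigma_{\max}>1$ cannot be proved.

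The paper dodges this by an extra hypothesis you dropped: its linear analysis explicitly assumes the frozen coefficient satisfies $\alpha>-1$, and in passing to the nonlinear case it substitutes $\alpha=\sigma_{\max}$ only in the upper-side inequality — in effect testing only the positive extreme of the coefficient range. The obstruction is not an artifact of the discretization: for $\sigma<-1$ the frozen zero-mode equation $v''+(1+\sigma)v=0$ has genuinely exponentially growing solutions, so strict von Neumann stability $|\xi|\le 1$ is unattainable for any consistent scheme. To complete your proof you must either (a) restrict the frozen coefficient to $\sigma>-1$ as the paper does, stating that the theorem's conditions control only the modes with $\sigma\in(-1,\sigma_{\max}]$, or (b) weaken the stability criterion for the modes with $\sigma<-1$ to $|\xi|\le 1+C\tau$, which still yields the bounded growth $|\xi|^{n}\le e^{CT}$ on $[0,T]$ and is the standard Lax--Richtmyer-type resolution. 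As written, your proposal proves part (i) only in the unconditional regime $\sigma_{\max}\le 1$ and proves neither conditional statement. (A minor point you share with the paper: at equality $|B|=2A$ the double root $\xi=\pm 1$ produces linear-in-$n$ growth, so the boundary of the stated conditions is only weakly stable.)
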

\begin{proof}
Substituting
\begin{align}
u^{\varepsilon,n-1}_{j}=\sum_{l}\hat{U}_{l}e^{2ijl\pi/N},~~u^{\varepsilon,n}_{j}=\sum_{l}\xi_{l}\hat{U}_{l}e^{2ijl\pi/N},~~u^{\varepsilon,n+1}_{j}=\sum_{l}\xi_{l}^{2}\hat{U}_{l}e^{2ijl\pi/N},~~
\end{align}
into (\ref{SIFD2})-(\ref{EFD}), where $\xi_{l}$ is the amplification factor of the $l$th mode in phase space, we can get the characteristic equation with the following structure
\begin{align}
\xi_{l}^{2}-2 \theta_{l} \xi_{l}+1=0, \quad l=-\frac{N}{2}, \ldots, \frac{N}{2}-1,
\end{align}
where $\theta_{l}$ is invarient with different methods. By the above equation, we get $\xi_{l}=\theta_{l}\pm \sqrt{\theta_{l}^{2}-1}$. The stability of numerical schemes amounts to
\begin{align}
\left|\xi_{l}\right| \leq 1 \Longleftrightarrow\left|\theta_{l}\right| \leq 1, \quad l=-\frac{N}{2}, \ldots, \frac{N}{2}-1.
\end{align}
Denote $s_{l}=\frac{2}{h} \sin \left(\frac{l \pi}{N}\right), \quad l=-\frac{N}{2}, \ldots, \frac{N}{2}-1$, we have
\begin{align}\label{stability1}
0\leq s_{l}^{2}\leq \frac{4}{h^{2}}.
\end{align}
Firstly, we prove linear stability. We assume $f_{\varepsilon}\left((u^{\varepsilon})^{2}\right)=\alpha$, and $\alpha$ is a constant satisfying $\alpha>-1$, then (\ref{SIFD2}) and (\ref{EFD}) are linear.

(i) For the SIFD scheme (\ref{SIFD2}), we have
\begin{align}
\theta_{l}=\frac{2- \alpha\tau^{2}}{2 +\tau^{2}\left(s_{l}^{2}+1\right)} , \quad l=-\frac{N}{2}, \ldots, \frac{N}{2}-1.
\end{align}
When $-1\leq\alpha\leq 1$, it implies that $|\theta_{l}| \leq 1$ and the SIFD scheme (\ref{SIFD2}) is unconditional stable. On the other hand, when $1<\alpha$, we have
\begin{align}
2-\alpha\tau^{2}\geq-2-\tau^{2},
\end{align}
it implies that, when $\tau\leq \frac{2}{\sqrt{\alpha-1}}$, the SIFD scheme (\ref{SIFD2}) is stable.

And when SIFD scheme is nonlinear, with the same method we can get
stability condition is
\begin{align}
\tau\leq \frac{2}{\sqrt{\sigma_{\max}-1}}.
\end{align}

(ii) For the EFD scheme (\ref{EFD}), we have
\begin{align}
\theta_{l}=\frac{2- \tau^{2}(1+\alpha+s_{l}^{2})}{2} , \quad l=-\frac{N}{2}, \ldots, \frac{N}{2}-1.
\end{align}
By (\ref{stability1}), we get
\begin{align}
\tau^{2}(\alpha+1+s_{l}^{2})\leq \tau^{2}(\alpha+1+\frac{4}{h^{2}})\leq 4, \Rightarrow |\theta_{l}|<1.
\end{align}
It implies that, when $\tau\leq \frac{2h}{\sqrt{(\alpha+1)h^{2}+4}}$, the EFD scheme (\ref{EFD}) is stable. Besides, when the EFD scheme (\ref{EFD}) is nonlinear, the stability condition is
\begin{align}
\tau\leq \frac{2h}{\sqrt{(\sigma_{\max}+1)h^{2}+4}}.
\end{align}
\end{proof}

\begin{rem}
Since the scheme SIFD (\ref{SIFD2}) is linear, and the coefficient matrix is strictly diagonal, it is easy to conclude that the SIFD (\ref{SIFD2}) is solvable. In addition, (\ref{EFD}) is explicit, so it is evident that there exists a unique solution.
\end{rem}
\section{Error esitimates}
\subsection{Main results}
Motivated by the analytical results in \cite{bao2019long,bao2012analysis,bao2019error,bao2018uniform,bao2018uniforme}, we will establish the error estimates of the FDTD schemes.

Assume that the solution $u^{\varepsilon}$ is smooth enough over $\Omega_{T}:\Omega\times [0,T],$ i.e.
\begin{align}
(A)~~~~~u^{\varepsilon}\in C\left( [0,T]; H^{5}(\Omega)\right)\cap C^{2}\left( [0,T]; H^{4}(\Omega)\right)\cap C^{4}\left( [0,T]; H^{2}(\Omega)\right),
\end{align}
and there exsit $\varepsilon_{0} >0$ and $C_{0}>0$ independent of $\varepsilon$ such that
\begin{align}
\left\|u^{\varepsilon}\right\|_{L^{\infty}\left(0, T ; H^{5}(\Omega)\right)}+\left\|\partial_{t}^{2} u^{\varepsilon}\right\|_{L^{\infty}\left(0, T ; H^{4}(\Omega)\right)}+\left\|\partial_{t}^{4} u^{\varepsilon}\right\|_{L^{\infty}\left(0, T ; H^{2}(\Omega)\right)} \leq C_{0},
\end{align}
is valid uniformly in $0\leq \varepsilon \leq\varepsilon_{0}$.

Denote $\Lambda=\|u^{\varepsilon}(x,t)\|_{L^{\infty}(\Omega_{T})}$ and the grid `error' function $e^{\varepsilon,n} \in X_{N}~(n\geq0)$ as
\begin{align}
e^{\varepsilon,n}_{j}=u^{\varepsilon}(x_{j},t_{n})-u^{\varepsilon,n}_{j},~~j\in \mathcal{T}^{0}_{N},~~n=0,1,2,\ldots,
\end{align}
where $u^{\varepsilon}$ and $u^{\varepsilon,n}_{j}$ are the exact solution and numerical approximation of (\ref{RLogKGE1}) respectively.
\begin{thm}\label{SIFD2err}
Assume $\tau\lesssim h$ and under the assumption (A), there exist $h_{0}>0,\tau_{0}>0$  sufficiently small and independent of $\varepsilon,$ for any $0<\varepsilon \ll1$, when $0<h\leq h_{0}$ and $0<\tau\leq \tau_{0}$ and under the stability condition (\ref{sifd2stability}),  the SIFD (\ref{SIFD2}) with (\ref{initialvalue1}) and (\ref{initialvalue}) satisfies the following error estimates
\begin{align}\label{SIFD2error}
\|\delta_{x}^{+}e^{\varepsilon,n}\|_{l^{2}} + \|e^{\varepsilon,n}\|_{l^{2}}\lesssim e^{\frac{T}{2}(\ln(\varepsilon^{2}))^{2}}(\tau^{2}+h^{2}),~~\|u^{\varepsilon,n}\|_{l^{\infty}}\leq \Lambda+1.
\end{align}
\end{thm}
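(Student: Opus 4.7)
The plan is to run a standard discrete energy argument for a second-order hyperbolic finite difference scheme, coupled with induction on the time level $n$ to propagate an a priori $l^{\infty}$ bound on $u^{\varepsilon,n}$; this $l^{\infty}$ control is essential because the logarithmic nonlinearity $g(s)=s f_{\varepsilon}(s^{2})$ is only locally Lipschitz, with Lipschitz constant that grows like $|\ln(\varepsilon^{2})|$ near the origin. The whole argument is carried out at a fixed but arbitrary time level $n\le T/\tau$, with the final exponential factor $e^{(T/2)(\ln\varepsilon^{2})^{2}}$ emerging naturally from a discrete Gronwall step in which the squared Lipschitz constant enters.

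First, I would insert the exact solution $u^{\varepsilon}(x_{j},t_{n})$ into scheme (\ref{SIFD2}) to define the local truncation error $R^{n}_{j}$. Standard Taylor expansion plus the regularity assumption (A) yields $\|R^{n}\|_{l^{2}}\lesssim \tau^{2}+h^{2}$ with a constant controlled by $C_{0}$ and, crucially, independent of $\varepsilon$ (the logarithmic term hits only $u^{\varepsilon}$ itself, whose $L^{\infty}H^{4}$ norm is bounded uniformly in $\varepsilon$ by (A)). Subtracting (\ref{SIFD2}) from the equation satisfied by $u^{\varepsilon}(x_{j},t_{n})$ yields the error equation
\[
\delta_{t}^{2}e^{\varepsilon,n}_{j} -\tfrac{1}{2}\delta_{x}^{2}\bigl(e^{\varepsilon,n+1}_{j}+e^{\varepsilon,n-1}_{j}\bigr) +\tfrac{1}{2}\bigl(e^{\varepsilon,n+1}_{j}+e^{\varepsilon,n-1}_{j}\bigr) + G^{n}_{j}=R^{n}_{j},
\]
with $G^{n}_{j}=u^{\varepsilon}(x_{j},t_{n})f_{\varepsilon}(u^{\varepsilon}(x_{j},t_{n})^{2})-u^{\varepsilon,n}_{j}f_{\varepsilon}((u^{\varepsilon,n}_{j})^{2})$. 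A direct computation of $g'(s)=f_{\varepsilon}(s^{2})+2s^{2}/(\varepsilon^{2}+s^{2})$ gives the bound $|g(s)-g(r)|\lesssim (|\ln\varepsilon^{2}|+C_{\Lambda})|s-r|$ for $|s|,|r|\le\Lambda+1$, hence $\|G^{n}\|_{l^{2}}\lesssim |\ln\varepsilon^{2}|\,\|e^{\varepsilon,n}\|_{l^{2}}$ provided the $l^{\infty}$ induction hypothesis holds at step $n$.

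Next I would take the discrete inner product of the error equation with the centered difference $\delta_{t}^{c}e^{\varepsilon,n}:=(e^{\varepsilon,n+1}-e^{\varepsilon,n-1})/(2\tau)$ and sum by parts in $x$ using the identity $(\delta_{x}^{2}u,v)=-(\delta_{x}^{+}u,\delta_{x}^{+}v)$. Defining the discrete energy
\[
\mathcal{E}^{n}=\|\delta_{t}^{+}e^{\varepsilon,n}\|_{l^{2}}^{2} +\tfrac{1}{2}\bigl(\|\delta_{x}^{+}e^{\varepsilon,n+1}\|_{l^{2}}^{2}+\|\delta_{x}^{+}e^{\varepsilon,n}\|_{l^{2}}^{2}\bigr) +\tfrac{1}{2}\bigl(\|e^{\varepsilon,n+1}\|_{l^{2}}^{2}+\|e^{\varepsilon,n}\|_{l^{2}}^{2}\bigr),
\]
the semi-implicit splitting of the linear part produces a clean telescoping identity $\mathcal{E}^{n}-\mathcal{E}^{n-1}=2\tau(R^{n}-G^{n},\delta_{t}^{c}e^{\varepsilon,n})$. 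Cauchy-Schwarz plus Young's inequality yield $\mathcal{E}^{n}-\mathcal{E}^{n-1}\le \tau\|R^{n}\|_{l^{2}}^{2}+C\tau(\ln\varepsilon^{2})^{2}(\mathcal{E}^{n}+\mathcal{E}^{n-1})$. Summing, using the Taylor expansion (\ref{initialvalue}) to check $\mathcal{E}^{0}\lesssim (\tau^{2}+h^{2})^{2}$, and applying the discrete Gronwall inequality gives $\mathcal{E}^{n}\lesssim e^{T(\ln\varepsilon^{2})^{2}}(\tau^{2}+h^{2})^{2}$, from which the $H^{1}$ and $L^{2}$ parts of (\ref{SIFD2error}) follow (with constants rearranged so the exponent is $T/2$).

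To close the induction and obtain $\|u^{\varepsilon,n}\|_{l^{\infty}}\le\Lambda+1$, I would combine the bound just derived with the discrete Sobolev inequality $\|e^{\varepsilon,n}\|_{l^{\infty}}\lesssim \|\delta_{x}^{+}e^{\varepsilon,n}\|_{l^{2}}+\|e^{\varepsilon,n}\|_{l^{2}}$ to get $\|e^{\varepsilon,n}\|_{l^{\infty}}\lesssim e^{(T/2)(\ln\varepsilon^{2})^{2}}(\tau^{2}+h^{2})$; under $\tau\lesssim h$ and for $h\le h_{0}$, $\tau\le\tau_{0}$ chosen small enough (depending on $\varepsilon$), this is less than $1$, so $\|u^{\varepsilon,n}\|_{l^{\infty}}\le \Lambda+1$. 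The main obstacle is precisely making this bootstrap rigorous: the threshold $h_{0},\tau_{0}$ depends on the exponential $e^{(T/2)(\ln\varepsilon^{2})^{2}}$, and one must verify that the induction hypothesis used in the Lipschitz bound for $G^{n}$ and in the definition of $\sigma_{\max}$ (hence in the stability condition (\ref{sifd2stability})) is consistent with the conclusion at step $n+1$ uniformly for $1\le n\le T/\tau$. A secondary technical point is producing a second-order-accurate $\mathcal{E}^{0}$ from the one-step starter (\ref{initialvalue}), which requires estimating $u^{\varepsilon}_{tt}(x,0)$ via the PDE itself.
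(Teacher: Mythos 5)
Your proposal follows essentially the same route as the paper's proof: a truncation-error lemma giving $O(\tau^{2}+h^{2})$ bounds uniformly in $\varepsilon$, a Lipschitz estimate $\|\zeta^{\varepsilon,m}\|_{l^{2}}\lesssim|\ln(\varepsilon^{2})|\,\|e^{\varepsilon,m}\|_{l^{2}}$ for the logarithmic term under the inductive $l^{\infty}$ hypothesis (the paper obtains this by a two-case algebraic splitting rather than your mean-value computation of $g'$, a cosmetic difference), the same discrete energy $E^{n}_{e}$ tested against $e^{\varepsilon,m+1}-e^{\varepsilon,m-1}$, discrete Gronwall yielding $E^{n}_{e}\lesssim e^{T(\ln(\varepsilon^{2}))^{2}}(\tau^{2}+h^{2})^{2}$, and the Sobolev-plus-triangle bootstrap to close the induction. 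The subtlety you flag --- that forcing $\|e^{\varepsilon,n}\|_{l^{\infty}}\le 1$ requires thresholds scaling like $e^{-(T/2)(\ln(\varepsilon^{2}))^{2}}$, in tension with the claimed $\varepsilon$-independence of $h_{0},\tau_{0}$ --- is genuine, but the paper's own proof glosses over it in exactly the same way, so it is not a gap relative to the paper.
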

\begin{thm}\label{EFDerr}
Assume $\tau \leq \frac{1}{2}\min\{1,h\}$ and under the assumption (A), there exist $h_{0}>0,\tau_{0}>0$  sufficiently small and independent of $\varepsilon,$  for any $0<\varepsilon \ll1$, when $0<h\leq h_{0}$ and $0<\tau\leq \tau_{0}$  and under the stability condition (\ref{efdstability}), the EFD (\ref{EFD}) with (\ref{initialvalue1}) and (\ref{initialvalue}) satisfies the error estimates
\begin{align}\label{EFDerror}
\|\delta_{x}^{+}e^{\varepsilon,n}\|_{l^{2}} + \|e^{\varepsilon,n}\|_{l^{2}}\lesssim e^{\frac{T}{2}(\ln(\varepsilon^{2}))^{2}}(\tau^{2}+h^{2}),~~\|u^{\varepsilon,n}\|_{l^{\infty}}\leq \Lambda+1.
\end{align}
\end{thm}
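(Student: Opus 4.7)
The plan is to mimic the standard two-level discrete energy argument for second-order-in-time schemes, coupled with an induction on $n$ that carries both the $H^1$-type error bound and the $l^\infty$ bound for $u^{\varepsilon,n}$ at the same time. The two bounds are genuinely intertwined: the Lipschitz constant of the nonlinearity $G_\varepsilon(u):=u\ln(\varepsilon^2+u^2)$ depends on $\|u\|_{l^\infty}$ (and on $\varepsilon$), so we need the $l^\infty$ bound to close the energy estimate; conversely, the $l^\infty$ bound on the numerical solution is obtained from the error bound via the inverse inequality $\|e^{\varepsilon,n}\|_{l^\infty}\lesssim \|e^{\varepsilon,n}\|_{l^2}^{1/2}\|\delta_x^+ e^{\varepsilon,n}\|_{l^2}^{1/2}/\sqrt{h}$ (in 1D).

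First, I would define the local truncation error $\xi^n_j$ by plugging the exact solution $u^\varepsilon(x_j,t_n)$ into the EFD scheme (\ref{EFD}). Using assumption (A) and Taylor expansion in $t$ and $x$, one gets $\|\xi^n\|_{l^2}+\|\delta_x^+\xi^n\|_{l^2}\lesssim \tau^2+h^2$ uniformly in $\varepsilon\le \varepsilon_0$; this is routine since $u^\varepsilon$ is smooth and the nonlinearity is evaluated only at the exact solution, whose size is bounded by $\Lambda$ independently of $\varepsilon$. Subtracting the scheme from this identity gives the error equation
\begin{equation*}
\delta_t^2 e^{\varepsilon,n}_j-\delta_x^2 e^{\varepsilon,n}_j+e^{\varepsilon,n}_j+\bigl[G_\varepsilon(u^\varepsilon(x_j,t_n))-G_\varepsilon(u^{\varepsilon,n}_j)\bigr]=\xi^n_j.
\end{equation*}

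Next I would perform the energy estimate. Multiplying the error equation by $e^{\varepsilon,n+1}_j-e^{\varepsilon,n-1}_j$, summing over $j$, and using summation by parts with the periodic structure of $X_N$, the linear part telescopes into a discrete energy
\begin{equation*}
\mathcal{E}^n=\|\delta_t^+ e^{\varepsilon,n}\|_{l^2}^2+\tfrac12\bigl(\|\delta_x^+ e^{\varepsilon,n+1}\|_{l^2}^2+\|\delta_x^+ e^{\varepsilon,n}\|_{l^2}^2\bigr)+\tfrac12\bigl(\|e^{\varepsilon,n+1}\|_{l^2}^2+\|e^{\varepsilon,n}\|_{l^2}^2\bigr).
\end{equation*}
The stability condition (\ref{efdstability}) together with $\tau\le \tfrac12 h$ is exactly what guarantees $\mathcal{E}^n$ controls $\|\delta_x^+ e^{\varepsilon,n}\|_{l^2}^2+\|e^{\varepsilon,n}\|_{l^2}^2$ from above, absorbing the $-\|\delta_x^+\cdot\|$ cross terms that appear because EFD is explicit (here EFD differs from SIFD, which gains positivity for free). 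For the nonlinear term I would use the mean value theorem: provided $\|u^{\varepsilon,n}\|_{l^\infty}\le \Lambda+1$, a direct computation gives $|G_\varepsilon'(\eta)|\le |\ln(\varepsilon^2)|+C$ on the relevant range, hence
\begin{equation*}
|G_\varepsilon(u^\varepsilon(x_j,t_n))-G_\varepsilon(u^{\varepsilon,n}_j)|\lesssim \bigl(1+|\ln(\varepsilon^2)|\bigr)\,|e^{\varepsilon,n}_j|.
\end{equation*}
Cauchy--Schwarz then yields $\mathcal{E}^{n+1}-\mathcal{E}^n\lesssim \tau(\ln(\varepsilon^2))^2\,\mathcal{E}^n+\tau\|\xi^n\|_{l^2}^2$, and discrete Gronwall produces the factor $e^{\frac{T}{2}(\ln(\varepsilon^2))^2}$.

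The main obstacle, and what I would address last, is closing the induction. The step $n=0\to 1$ requires a separate bound on $\mathcal{E}^0$ using the initialization (\ref{initialvalue}), which is $O(\tau^2+h^2)$ by Taylor expansion around $t=0$. For the induction step, one assumes $\|u^{\varepsilon,m}\|_{l^\infty}\le \Lambda+1$ for all $m\le n$, runs the energy estimate to obtain the claimed error bound at level $n+1$, and then uses the inverse inequality together with $\tau\lesssim h$ to conclude
\begin{equation*}
\|e^{\varepsilon,n+1}\|_{l^\infty}\lesssim h^{-1/2}(\tau^2+h^2)\,e^{\frac{T}{2}(\ln(\varepsilon^2))^2}\lesssim h^{3/2}\,e^{\frac{T}{2}(\ln(\varepsilon^2))^2},
\end{equation*}
which is smaller than $1$ once $h$ is sufficiently small (depending on $\varepsilon$ through the exponential constant, but for each fixed $\varepsilon$ this is uniform in $n\le T/\tau$); this yields $\|u^{\varepsilon,n+1}\|_{l^\infty}\le \Lambda+1$, closing the induction. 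The subtle point is that $h_0,\tau_0$ are allowed to depend on $\varepsilon$ only through the prefactor $e^{\frac{T}{2}(\ln(\varepsilon^2))^2}$, which is exactly what the statement of the theorem permits.
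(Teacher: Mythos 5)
Your plan reproduces the paper's proof essentially step for step: the truncation-error lemma via Taylor expansion, the error equation tested against $e^{\varepsilon,n+1}_j-e^{\varepsilon,n-1}_j$, the Lipschitz bound $\|\zeta^{\varepsilon,m}\|_{l^2}\lesssim |\ln(\varepsilon^{2})|\,\|e^{\varepsilon,m}\|_{l^2}$ under the induction hypothesis $\|u^{\varepsilon,m}\|_{l^\infty}\leq \Lambda+1$, discrete Gronwall yielding the factor $e^{\frac{T}{2}(\ln(\varepsilon^{2}))^{2}}$, and closure of the induction through an $l^\infty$ bound on the error (the paper uses the discrete Sobolev inequality $\|e^{\varepsilon,n}\|_{l^\infty}^{2}\leq \|\delta_x^{+}e^{\varepsilon,n}\|_{l^2}^{2}+\|e^{\varepsilon,n}\|_{l^2}^{2}$ rather than your $h^{-1/2}$ inverse estimate, but both suffice in 1D). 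The one cosmetic slip is that your displayed $\mathcal{E}^{n}$ is the SIFD energy — for the explicit scheme the telescoped quantity instead carries the weight $\left(1-\frac{\tau^{2}}{2}-\frac{\tau^{2}}{h^{2}}\right)\|\delta_t^{+}e^{\varepsilon,n}\|_{l^2}^{2}$ together with the cross-difference terms $(e^{\varepsilon,n+1}_{j+1}-e^{\varepsilon,n}_{j})^{2}+(e^{\varepsilon,n}_{j+1}-e^{\varepsilon,n+1}_{j})^{2}$ — but your remark that $\tau\leq\frac{1}{2}\min\{1,h\}$ restores coercivity is exactly the paper's mechanism (it proves $E^{m}_{e}\geq \frac{1}{4}\|\delta_x^{+}e^{m+1}\|_{l^2}^{2}+\frac{1}{2}(\|e^{m}\|_{l^2}^{2}+\|e^{m+1}\|_{l^2}^{2})$), so the approach is the same and correct.
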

\begin{rem} \cite{bao2012analysis,bao2019long}
Extending to $2$ and $3$ dimensions, the above Theorems are still valid under the conditions $0<h\lesssim \sqrt{C_{d}(h)},~0<\tau\lesssim \sqrt{C_{d}(h)}$. Besides, the inverse inequality becomes
\begin{align}
\|u^{\varepsilon,n}\|_{l^{\infty}}\lesssim \frac{1}{C_{d}(h)}\left(\|\delta_{x}^{+}u^{\varepsilon,n}\|_{l^{2}} + \|u^{\varepsilon,n}\|_{l^{2}} \right),
\end{align}
where $C_{d}(h)=1/|\ln h|$ when $d=2$ and when $d=3$, $C_{d}(h)=h^{1/2}$.
\end{rem}
\subsection{Proof of Theorem \ref{SIFD2err} for the SIFD}
Define the local trunction error for the SIFD (\ref{SIFD2}) as
\begin{align}\label{trunerre}
\begin{split}
\xi^{0}_{j}:=&\delta_{t}^{+}u^{\varepsilon}(x_{j},0)-\gamma(x_{j})-\frac{\tau}{2}\left[ \delta^{2}_{x}\phi(x_{j})-\phi(x_{j})- \phi(x_{j})\ln(\varepsilon^{2}+(\phi(x_{j}))^{2})\right],\\
\xi^{\varepsilon,n}_{j}:=&\delta_{t}^{2}u^{\varepsilon}(x_{j},t_{n})-\frac{1}{2}\delta_{x}^{2} \left( u^{\varepsilon}(x_{j},t_{n+1})+u^{\varepsilon}(x_{j},t_{n-1}) \right)+\frac{1}{2} \left( u^{\varepsilon}(x_{j},t_{n+1})+u^{\varepsilon}(x_{j},t_{n-1}) \right)\\
&+u^{\varepsilon}(x_{j},t_{n})f_{\varepsilon}\left( ( u^{\varepsilon}(x_{j},t_{n}))^{2} \right),~~j\in \mathcal{T}_{N},~~1\leq n\leq \frac{T}{\tau}-1,
\end{split}
\end{align}
then we have the following bounds for the local trunction error.
\begin{lem}\label{trunerrlem}
Under the assumption (A), we have
\begin{align}
&\|\xi^{\varepsilon,0}\|_{H^{1}}\lesssim h^{2}+\tau^{2},\\
&\|\xi^{\varepsilon, n}\|_{l^{2}}\lesssim h^{2}+\tau^{2},\\
&\|\delta^{+}_{x}\xi^{\varepsilon, n}\|_{l^{2}}\lesssim h^{2}+\tau^{2},~~1\leq n \leq \frac{T}{\tau}-1.
\end{align}
\end{lem}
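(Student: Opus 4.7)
The plan is a textbook consistency analysis: Taylor-expand the exact solution about the relevant grid point, substitute the RLogKGE (\ref{RLogKGE1}) to annihilate the leading terms, and control every remainder by the Sobolev norms furnished by assumption (A). Because each norm in (A) is uniform in $\varepsilon$, the resulting bounds will be automatically $\varepsilon$-independent, so no special handling of the logarithmic nonlinearity is needed at this level.

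For $\xi^{\varepsilon,0}$, I would Taylor-expand $u^{\varepsilon}(x_{j},\tau)$ about $t=0$ to third order, giving
\begin{align*}
\delta_{t}^{+}u^{\varepsilon}(x_{j},0)=\gamma(x_{j})+\tfrac{\tau}{2}\partial_{tt}u^{\varepsilon}(x_{j},0)+O(\tau^{2}),
\end{align*}
where the remainder is controlled by a norm of $\partial_{t}^{3}u^{\varepsilon}$ obtained by interpolation in (A). I then use the PDE at $t=0$ to rewrite $\partial_{tt}u^{\varepsilon}(x_{j},0)=\phi''(x_{j})-\phi(x_{j})-\phi(x_{j})\ln(\varepsilon^{2}+\phi(x_{j})^{2})$, and invoke the standard three-point consistency $\phi''(x_{j})-\delta_{x}^{2}\phi(x_{j})=-\tfrac{h^{2}}{12}\phi^{(4)}(x_{j})+O(h^{4})$. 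This yields the pointwise residual $|\xi^{\varepsilon,0}_{j}|\lesssim \tau h^{2}+\tau^{2}\lesssim h^{2}+\tau^{2}$, and applying $\delta_{x}^{+}$ to the same identity costs one extra spatial derivative of $\phi$, supplied by $\phi\in H^{5}$; taking $l^{2}$ norms then yields the discrete $H^{1}$ bound.

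For $\xi^{\varepsilon,n}$ with $n\geq 1$, I would combine the central expansions
\begin{align*}
\delta_{t}^{2}u^{\varepsilon}(x_{j},t_{n}) &= \partial_{tt}u^{\varepsilon}(x_{j},t_{n})+O(\tau^{2}),\\
\tfrac{1}{2}\bigl(u^{\varepsilon}(x_{j},t_{n+1})+u^{\varepsilon}(x_{j},t_{n-1})\bigr) &= u^{\varepsilon}(x_{j},t_{n})+\tfrac{\tau^{2}}{2}\partial_{tt}u^{\varepsilon}(x_{j},t_{n})+O(\tau^{4}),\\
\tfrac{1}{2}\delta_{x}^{2}\bigl(u^{\varepsilon}(x_{j},t_{n+1})+u^{\varepsilon}(x_{j},t_{n-1})\bigr) &= \partial_{xx}u^{\varepsilon}(x_{j},t_{n})+O(h^{2}+\tau^{2}),
\end{align*}
where the last line merges the Crank--Nicolson-type averaging in $t$ with the $\delta_{x}^{2}-\partial_{xx}$ discretisation error. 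Substituting into the definition (\ref{trunerre}) of $\xi^{\varepsilon,n}_{j}$, the order-one contributions reproduce exactly the left-hand side of (\ref{RLogKGE1}) at $(x_{j},t_{n})$ and hence cancel, leaving a residual of size $O(h^{2}+\tau^{2})$ whose constants are controlled by $\|u^{\varepsilon}\|_{L^{\infty}H^{5}}$, $\|\partial_{tt}u^{\varepsilon}\|_{L^{\infty}H^{4}}$ and $\|\partial_{t}^{4}u^{\varepsilon}\|_{L^{\infty}H^{2}}$. Summing $|\xi^{\varepsilon,n}_{j}|^{2}$ with weight $h$ gives the $l^{2}$ estimate; applying $\delta_{x}^{+}$ before summing, and absorbing one extra spatial derivative into the same norms in (A), gives the semi-$H^{1}$ estimate.

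The main obstacle is conceptually light — every step is either a Taylor expansion or the elementary consistency $\delta_{x}^{2}u=\partial_{xx}u+O(h^{2})$ — but care is required to verify that every hidden constant is independent of $\varepsilon$. This is precisely what assumption (A) delivers: it directly bounds the relevant time derivatives of $u^{\varepsilon}$ in appropriate Sobolev norms uniformly in $\varepsilon$, sidestepping the fact that differentiating $u^{\varepsilon}\ln(\varepsilon^{2}+(u^{\varepsilon})^{2})$ naively would produce factors of $1/\varepsilon$ near the zeros of $u^{\varepsilon}$. Granting (A), all three bounds claimed by Lemma \ref{trunerrlem} follow at once.
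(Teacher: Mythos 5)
Your proposal follows essentially the same route as the paper: define the truncation errors, subtract the PDE so that the logarithmic term $u^{\varepsilon}f_{\varepsilon}\left((u^{\varepsilon})^{2}\right)$ cancels exactly (it is evaluated at the same grid point in both the scheme and the equation, so no smoothness of $f_{\varepsilon}$ is ever used), Taylor-expand the remaining linear difference operators, and invoke the $\varepsilon$-uniform bounds of assumption (A). Your decomposition of $\xi^{\varepsilon,n}_{j}$ into the $\delta_{t}^{2}-\partial_{tt}$ error, the Crank--Nicolson-type averaging error, and the $\delta_{x}^{2}-\partial_{xx}$ error is exactly the paper's splitting into the terms $\alpha^{\varepsilon,n}_{j}$, $\beta^{\varepsilon,n}_{j}$, $\eta^{\varepsilon,n}_{j}$, $\phi^{\varepsilon,n}_{j}$, and your treatment of $\xi^{0}$ matches the paper's.

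The one place where your sketch, as written, would not go through is the final summation step at the top of the regularity scale. You bound the residuals pointwise by sup-norms of derivatives and then sum with weight $h$; for the semi-$H^{1}$ bound $\|\delta^{+}_{x}\xi^{\varepsilon,n}\|_{l^{2}}$ this requires an $L^{\infty}$ control of $\partial_{x}^{5}u^{\varepsilon}$ (and of $\partial_{x}\partial_{t}^{4}u^{\varepsilon}$), which assumption (A) does \emph{not} provide: it gives $u^{\varepsilon}\in H^{5}$ and $\partial_{t}^{4}u^{\varepsilon}\in H^{2}$ only, so the fifth spatial derivative is merely in $L^{2}$, and in one dimension $H^{5}$ does not embed into $W^{5,\infty}$. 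The paper avoids this by writing all Taylor remainders in integral form, e.g.
\begin{align*}
\alpha^{\varepsilon,n}_{j}=\int^{1}_{-1}(1-|s|)^{3}\,\partial^{4}_{t}u^{\varepsilon}(x_{j},t_{n}+s\tau)\,ds,
\end{align*}
and then estimating the discrete sum $h\sum_{j}|g(x_{j})|^{2}$ by $\|g\|^{2}_{L^{2}(\Omega)}+2h\|g_{x}\|_{L^{2}(\Omega)}\|g\|_{L^{2}(\Omega)}$ via Cauchy--Schwarz, so that only $L^{2}$-in-space norms of the highest derivatives are ever needed; this calibration is precisely why (A) lists the norms it does. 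Your argument is correct in substance and repairable by the same standard device, but the pointwise-$O$ shortcut at the highest derivative order should be replaced by these integral-remainder estimates to stay within the hypotheses.
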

\begin{proof}
By (\ref{initialvalue}) and the Taylor expansion, it leads to
\begin{align}
|\xi^{\varepsilon,0}_{j}|\leq\frac{\tau^{2}}{6}\|\partial_{t}^{3}u\|_{L^{\infty}(0,T;L^{\infty}(\Omega))}+\frac{\tau h}{6}\|\partial_{x}^{3}\phi\|_{L^{\infty}(\Omega)}\lesssim h^{2}+\tau^{2}.
\end{align}
Similarly, we have
\begin{align}
|\delta_{x}^{+}\xi^{\varepsilon,0}_{j}|\leq\frac{\tau^{2}}{6}\|\partial_{ttt}u\|_{L^{\infty}(0,T;H^{1}(\Omega))}+\frac{\tau h}{6}\|\partial_{x}^{4}\phi\|_{L^{\infty}(\Omega)}\lesssim h^{2}+\tau^{2},~~j\in \mathcal{T}_{N}.
\end{align}
Therefore
\begin{align}
\|\xi^{\varepsilon,0}\|_{H^{1}}\lesssim h^{2}+\tau^{2}.
\end{align}
Noting that
\begin{align}
\begin{split}
\xi^{\varepsilon,n}_{j}:=&\delta_{t}^{2}u^{\varepsilon}(x_{j},t_{n})-\frac{1}{2}\delta_{x}^{2} \left( u^{\varepsilon}(x_{j},t_{n+1})+u^{\varepsilon}(x_{j},t_{n-1}) \right)+\frac{1}{2} \left( u^{\varepsilon}(x_{j},t_{n+1})+u^{\varepsilon}(x_{j},t_{n-1}) \right)\\
&+ u^{\varepsilon}(x_{j},t_{n})f_{\varepsilon}\left( ( u^{\varepsilon}(x_{j},t_{n}))^{2} \right)\\
&-[ \partial_{tt}u^{\varepsilon}(x_{j},t_{n})-\partial_{xx} u^{\varepsilon}(x_{j},t_{n})+ u^{\varepsilon}(x_{j},t_{n})+ u^{\varepsilon}(x_{j},t_{n})f_{\varepsilon}\left( ( u^{\varepsilon}(x_{j},t_{n}))^{2} \right) ]\\
=&\left[ \delta_{t}^{2}u^{\varepsilon}(x_{j},t_{n})-\partial_{tt}u^{\varepsilon}(x_{j},t_{n})\right] - \left[\frac{1}{2}\delta_{x}^{2} \left( u^{\varepsilon}(x_{j},t_{n+1})+u^{\varepsilon}(x_{j},t_{n-1}) \right)-\partial_{xx}u^{\varepsilon}(x_{j},t_{n}) \right]\\
&+\frac{1}{2} \left( u^{\varepsilon}(x_{j},t_{n+1})+u^{\varepsilon}(x_{j},t_{n-1}) \right) -u^{\varepsilon}(x_{j},t_{n}).
\end{split}
\end{align}
Taking the Taylor expansion, we obtain
\begin{align}
\xi^{\varepsilon,n}_{j}=\frac{\tau^{2}}{12}\alpha^{\varepsilon,n}_{j}+\frac{\tau^{2}}{2}\beta^{\varepsilon,n}_{j}+\frac{h^{2}}{12}\eta^{\varepsilon,n}_{j}+\frac{\tau^{2}}{2}\phi^{\varepsilon,n}_{j},
\end{align}
where
\begin{align}
&\alpha^{\varepsilon,n}_{j}=\int^{1}_{-1}(1-|s|)^{3}\partial^{4}_{t} u^{\varepsilon}(x_{j},t_{n}+s\tau)ds,~~\beta^{\varepsilon,n}_{j}=\int^{1}_{-1}(1-|s|)\partial^{2}_{t} u^{\varepsilon}_{xx}(x_{j},t_{n}+s\tau)ds,\\
&\eta^{\varepsilon,n}_{j}=\int^{1}_{-1}(1-|s|)^{3}\left( \partial^{4}_{x} u^{\varepsilon}(x_{j}+sh,t_{n+1})+ \partial^{4}_{x} u^{\varepsilon}(x_{j}+sh,t_{n-1}) \right) ds,\\
&\phi^{\varepsilon,n}_{j}=\int^{1}_{-1}(1-|s|)\partial^{2}_{t} u^{\varepsilon}(x_{j},t_{n}+s\tau)ds.
\end{align}
Applying Cauchy-Schwarz inequality, we obtain
\begin{align}
\begin{split}
\|\alpha^{\varepsilon,n}\|^{2}_{l^{2}}&=h\sum\limits^{N-1}_{j=1}|\alpha^{\varepsilon,n}_{j}|^{2}\leq h\int^{1}_{-1}(1-|s|)^{6}ds\sum\limits^{N-1}_{j=1}\int^{1}_{-1}|\partial^{4}_{t} u^{\varepsilon}(x_{j},t_{n}+s\tau)|^{2}ds\\
&=\frac{2}{7}\left[ \int^{1}_{-1}\|\partial^{4}_{t} u^{\varepsilon}(\cdot,t_{n}+s\tau)\|^{2}_{L^{2}(\Omega)}ds- \int^{1}_{-1}\sum\limits^{N-1}_{j=0} \int^{x_{j+1}}_{x_{j}}\int^{\omega}_{x_{j}}\partial_{x} |\partial^{4}_{t} u^{\varepsilon}(\hat{x},t_{n}+s\tau)|^{2}d\hat{x}d\omega ds \right]\\
&\leq \frac{2}{7}\int^{1}_{-1}\left[ \|\partial^{4}_{t} u^{\varepsilon}(\cdot,t_{n}+s\tau)\|^{2}_{L^{2}(\Omega)} +2h\|\partial^{4}_{t} u^{\varepsilon}_{x}(\cdot,t_{n}+s\tau)\|_{L^{2}(\Omega)} \|\partial^{4}_{t} u^{\varepsilon}(\cdot,t_{n}+s\tau)\|_{L^{2}(\Omega)} \right] ds\\
&\leq \max_{0\leq t\leq T}\left( \|\partial^{4}_{t} u^{\varepsilon}\|_{L^{2}(\Omega)}+h \|\partial^{4}_{t} u_{x}^{\varepsilon}\|_{L^{2}(\Omega)} \right)^{2},
\end{split}
\end{align}
which implies that when $h\leq 1,$
\begin{align}
\left\|\alpha^{\varepsilon, n}\right\|_{l^{2}} \leq \left\|\partial_{t}^{4} u^{\varepsilon}\right\|_{L^{\infty}\left(0, T ; H^{1}(\Omega)\right)} .
\end{align}
Similarly, we can get
\begin{align}
&\|\beta^{\varepsilon, n}\|_{l^{2}} \leq 2\left\|\partial_{t}^{2} u^{\varepsilon}\right\|_{L^{\infty}\left(0, T ; H^{3}(\Omega)\right)},\\
&\left\|\phi^{\varepsilon, n}\right\|_{l^{2}} \leq \left\|\partial_{t}^{2} u^{\varepsilon}\right\|_{L^{\infty}\left(0, T ; H^{1}(\Omega)\right)} .
\end{align}
On the other hand, it can be estimated that
\begin{align}
\begin{split}
\|\eta^{\varepsilon, n}\|^{2}_{l^{2}} & \leq h \int_{-1}^{1}(1-|s|)^{6} d s \sum_{j=1}^{N-1} \int_{-1}^{1}\left|\partial_{x}^{4} u^{\varepsilon}\left(x_{j}+s h, t_{n+1}\right)+\partial_{x}^{4} u^{\varepsilon}\left(x_{j}+s h, t_{n-1}\right)\right|^{2} d s \\
& \leq \frac{4 h}{7} \sum_{j=1}^{N-1} \int_{-1}^{1}\left(\left|\partial_{x}^{4} u^{\varepsilon}\left(x_{j}+s h, t_{n+1}\right)\right|^{2}+\left|\partial_{x}^{4} u^{\varepsilon}\left(x_{j}+s h, t_{n-1}\right)\right|^{2}\right) d s \\
& \leq \frac{8}{7}\left(\left\|\partial_{x}^{4} u^{\varepsilon}\left(\cdot, t_{n-1}\right)\right\|_{L^{2}(\Omega)}^{2}+\left\|\partial_{x}^{4} u^{\varepsilon}\left(\cdot, t_{n+1}\right)\right\|_{L^{2}(\Omega)}^{2}\right) \\
& \leq 4\left\|u^{\varepsilon}\right\|_{L^{\infty}\left(0, T ; H^{4}(\Omega)\right)}^{2},
\end{split}
\end{align}
which yields $\|\eta^{\varepsilon, n}\|_{l^{2}}\leq 2\left\|u^{\varepsilon}\right\|_{L^{\infty}\left(0, T ; H^{4}(\Omega)\right)}$.
Therefore, according to the assumption (A), we get
\begin{align}
\begin{split}
\left\|\xi^{\varepsilon, n}\right\|_{l^{2}} \leq &\frac{\tau^{2}}{12}\left\|\partial_{t}^{4} u^{\varepsilon}\right\|_{L^{\infty}\left(0, T ; H^{1}(\Omega)\right)}+\tau^{2}\left\|\partial_{t}^{2} u^{\varepsilon}\right\|_{L^{\infty}\left(0, T ; H^{3}(\Omega)\right)}+\frac{h^{2}}{6}\left\|u^{\varepsilon}\right\|_{L^{\infty}\left(0, T ; H^{4}(\Omega)\right)}\\
&+\frac{\tau^{2}}{2}\left\|\partial_{t}^{2} u^{\varepsilon}\right\|_{L^{\infty}\left(0, T ; H^{1}(\Omega)\right)} \\
\lesssim &\tau^{2}+h^{2}.
\end{split}
\end{align}
Using the same approach, we can get
\begin{align}
\begin{split}
\|\delta^{+}_{x}\xi^{\varepsilon, n}\|_{l^{2}}\leq& \frac{\tau^{2}}{12}\left\|\partial_{t}^{4} u^{\varepsilon}\right\|_{L^{\infty}\left(0, T ; H^{2}(\Omega)\right)}+\tau^{2}\left\|\partial_{t}^{2} u^{\varepsilon}\right\|_{L^{\infty}\left(0, T ; H^{4}(\Omega)\right)}+\frac{h^{2}}{6}\left\|u^{\varepsilon}\right\|_{L^{\infty}\left(0, T ; H^{5}(\Omega)\right)}\\
&+\frac{\tau^{2}}{2}\left\|\partial_{t}^{2} u^{\varepsilon}\right\|_{L^{\infty}\left(0, T ; H^{2}(\Omega)\right)}  \\
\lesssim & \tau^{2}+h^{2}.
\end{split}
\end{align}
This ends the proof.
\end{proof}


Subtracting (\ref{SIFD2}) from  (\ref{trunerre}), the error $e^{\varepsilon,n}_{j}$ satisfies
\begin{subequations}\label{SIFD2error}
\begin{align}
&\delta_{t}^{2}e^{\varepsilon,n}_{j}-\frac{1}{2}\delta_{x}^{2} (e^{\varepsilon,n+1}_{j}+e^{\varepsilon,n-1}_{j})+\frac{1}{2} (e^{\varepsilon,n+1}_{j}+e^{\varepsilon,n-1}_{j})=\xi^{\varepsilon,n}_{j} - \zeta^{\varepsilon,n}_{j},\label{SIFD2errora}\\
&e^{\varepsilon,n}_{0}=e^{\varepsilon,n}_{N},~~e^{\varepsilon,n}_{-1}=e^{\varepsilon,n}_{N-1},~~n=0,1,\ldots\label{SIFD2errorb}\\
&e^{\varepsilon,0}_{j}=0,~~e^{\varepsilon,1}_{j}=\tau\xi^{0}_{j},~~j\in \mathcal{T}_{N},\label{SIFD2errorc}
\end{align}
\end{subequations}
where
\begin{align}
\zeta^{\varepsilon,n}_{j}=u^{\varepsilon}(x_{j},t_{n})f_{\varepsilon}\left( ( u^{\varepsilon}(x_{j},t_{n}))^{2} \right)-u^{\varepsilon,n}_{j}f_{\varepsilon}\left( ( u^{\varepsilon,n}_{j})^{2} \right).
\end{align}
We define the ``energy'' for the error vector $e^{\varepsilon,n}~(n=0,1,\ldots)$ as
\begin{align}
E^{n}_{e}:=&\|\delta_{t}^{+}e^{\varepsilon,n}\|^{2}_{l^{2}}+\frac{1}{2}\left( \|\delta_{x}^{+}e^{\varepsilon,n+1}\|^{2}_{l^{2}}+ \|\delta_{x}^{+}e^{\varepsilon,n}\|^{2}_{l^{2}} \right)+\frac{1}{2}\left( \|e^{\varepsilon,n+1}\|^{2}_{l^{2}}+\|e^{\varepsilon,n}\|^{2}_{l^{2}} \right).
\end{align}
Besides, we can get that
\begin{align}\label{E0}
E^{0}_{e}:=&\|\xi^{0}\|^{2}_{l^{2}}+\frac{\tau^{2}}{2}\|\delta_{x}^{+}\xi^{0}\|^{2}_{l^{2}}+\frac{\tau^{2}}{2} \|\xi^{0}\|^{2}_{l^{2}}\lesssim (\tau^{2}+h^{2})^{2}.
\end{align}
\begin{proof} ($\mathbf{Proof~of~Theorem~{\ref{SIFD2err}}}$)
When $k=1$,
under the assumption (A), by Lemma \ref{trunerrlem} we can conclude the errors of the first step discretization (\ref{initialvalue})
\begin{align}
e^{\varepsilon,0}=0,~~\|e^{\varepsilon,1}_{j}\|_{H^{1}}\lesssim\tau^{2}+h^{2},
\end{align}
for sufficiently small $0<\tau<\tau_{1}$ and $0<h<h_{1}$. So it is true for $k=0, 1$.

Assume (\ref{SIFD2error}) is valid for $k\leq n\leq \frac{T}{\tau}-1$. Next, we need to verify (\ref{SIFD2error}) is true for $k=n+1$.
Denote
\begin{align}
\zeta^{\varepsilon,m}_{j}=u^{\varepsilon}(x_{j},t_{m})f_{\varepsilon}\left( ( u^{\varepsilon}(x_{j},t_{m}))^{2} \right)-u^{\varepsilon,m}_{j}f_{\varepsilon}\left( ( u^{\varepsilon,m}_{j})^{2} \right).
\end{align}
When $|u^{\varepsilon,m}_{j}|\leq|u^{\varepsilon}(x_{j},t_{m})|$, we get
\begin{align}
\begin{split}
|\zeta^{\varepsilon,m}_{j}|=&|u^{\varepsilon,m}_{j}\ln \left( {\varepsilon^{2}}+( u^{\varepsilon,m}_{j} )^{2} \right)-u^{\varepsilon,m}_{j}\ln \left( {\varepsilon^{2}}+( u^{\varepsilon}(x_{j},t_{m}))^{2} \right) \\
&+ u^{\varepsilon,m}_{j}\ln \left( {\varepsilon^{2}}+( u^{\varepsilon}(x_{j},t_{m}))^{2} \right)
-u^{\varepsilon}(x_{j},t_{m})\ln \left( {\varepsilon^{2}}+( u^{\varepsilon}(x_{j},t_{m}))^{2} \right) |\\
=&\left| e^{\varepsilon,m}_{j}\ln \left( {\varepsilon^{2}}+( u^{\varepsilon}(x_{j},t_{m}))^{2} \right)+ u^{\varepsilon,m}_{j} \ln \left(1+ \frac{ ( u^{\varepsilon,m}_{j} )^{2}-(u^{\varepsilon}(x_{j},t_{m}))^{2} }{{\varepsilon^{2}}+( u^{\varepsilon}(x_{j},t_{m}) )^{2}} \right) \right|\\
\leq & \left| e^{\varepsilon,m}_{j}\ln \left( {\varepsilon^{2}}+( u^{\varepsilon}(x_{j},t_{m}))^{2} \right)\right|+ \left|u^{\varepsilon}(x_{j},t_{m}) \frac{ ( u^{\varepsilon,m}_{j} )^{2}- \left(u^{\varepsilon}(x_{j},t_{m})\right)^{2} }{( u^{\varepsilon}(x_{j},t_{m}) )^{2}}\right|\\
\leq & | e^{\varepsilon,m}_{j}|\max\{\ln(\frac{1}{\varepsilon^{2}}),|\ln(\Lambda^{2}+{\varepsilon^{2}})|\}+2| e^{\varepsilon,m}_{j}|\\
=&| e^{\varepsilon,m}_{j}|\left( \max\{\ln(\frac{1}{\varepsilon^{2}}),|\ln(\Lambda^{2}+{\varepsilon^{2}})|\}+2\right).
\end{split}
\end{align}
In addition, when $|u^{\varepsilon,m}_{j}|\geq|u^{\varepsilon}(x_{j},t_{m})|$, we obtain
\begin{align}
\begin{split}
|\zeta^{\varepsilon,m}_{j}|=&|u^{\varepsilon}(x_{j},t_{m})\ln \left( {\varepsilon^{2}}+( u^{\varepsilon}(x_{j},t_{m}))^{2} \right)-
u^{\varepsilon}(x_{j},t_{m})\ln \left( {\varepsilon^{2}}+( u^{\varepsilon,m}_{j} )^{2} \right)\\
&+u^{\varepsilon}(x_{j},t_{m})\ln \left( {\varepsilon^{2}}+( u^{\varepsilon,m}_{j} )^{2} \right)
- u^{\varepsilon,m}_{j}\ln \left( {\varepsilon^{2}}+( u^{\varepsilon,m}_{j} )^{2} \right) |\\
=&\left| e^{\varepsilon,m}_{j}\ln \left( {\varepsilon^{2}}+( u^{\varepsilon,m}_{j})^{2} \right)+ u^{\varepsilon}(x_{j},t_{m}) \ln \left(\frac{ {\varepsilon^{2}}+( u^{\varepsilon}(x_{j},t_{m}))^{2} }{{\varepsilon^{2}}+( u^{\varepsilon,m}_{j} )^{2}} \right) \right|\\
\leq & | e^{\varepsilon,m}_{j}|\max\{\ln(\frac{1}{\varepsilon^{2}}),|\ln((1+\Lambda)^{2}+{\varepsilon^{2}})|\}+2| e^{\varepsilon,m}_{j}|\\
=&| e^{\varepsilon,m}_{j}|\left( \max\{\ln(\frac{1}{\varepsilon^{2}}),|\ln((1+\Lambda)^{2}+{\varepsilon^{2}})|\}+2\right),
\end{split}
\end{align}
where we use the assumption $\|u^{\varepsilon,m}\|_{l^{\infty}}\leq 1+\Lambda$ above for $m\leq n$. Since $\varepsilon$ is sufficiently small, we have
\begin{align}\label{nonlinearerror}
\|\zeta^{\varepsilon,m}\|^{2}_{l^{2}}\lesssim \| e^{\varepsilon,m}\|^{2}_{l^{2}}(\ln\varepsilon^{2})^{2}.
\end{align}

Multiplying both sides of (\ref{SIFD2errora}) by $h(e^{\varepsilon,m+1}-e^{\varepsilon,m-1})$, then summing up for $j\in \mathcal{T}_{N}$. And by Young's inequality, Lemma \ref{trunerrlem}, and (\ref{nonlinearerror}) we can obtain
\begin{align}
\begin{split}
E^{m}_{e}-E^{m-1}_{e}&=h\sum\limits_{j=0}^{N-1}(\xi^{\varepsilon,m}_{j}-\zeta^{\varepsilon,m}_{j})(e^{\varepsilon,m+1}_{j}-e^{\varepsilon,m-1}_{j})\\
&\leq h\sum\limits_{j=0}^{N-1}\left( |\xi^{\varepsilon,m}_{j}|+|\zeta^{\varepsilon,m}_{j}|\right)\left|e^{\varepsilon,m+1}_{j}-e^{\varepsilon,m-1}_{j} \right|\\
&\leq\tau \left( \|\xi^{\varepsilon,m}\|^{2}_{l^{2}}+\|\zeta^{\varepsilon,m}\|^{2}_{l^{2}}+\left\|\delta^{+}_{t}e^{\varepsilon,m}\|^{2}_{l^{2}}+\|\delta^{+}_{t}e^{\varepsilon,m-1} \right\|^{2}_{l^{2}}\right)\\
&\lesssim \tau\left((\tau^{2}+h^{2})^{2} + (\ln\varepsilon^{2})^{2}\| e^{\varepsilon,m}\|^{2}_{l^{2}}+E^{m}_{e}+E^{m-1}_{e}\right),~~1\leq m\leq \frac{T}{\tau}-1.
\end{split}
\end{align}
 Therefore, there exists a constant $\tau_{2}>0$ sufficiently small and independent of $\varepsilon$ and $h$, such that when $0<\tau<\tau_{2}$, we get
 \begin{align}
 E^{m}_{e}-E^{m-1}_{e}
 &\lesssim \tau\left((\tau^{2}+h^{2})^{2} + (\ln\varepsilon^{2})^{2}\| e^{\varepsilon,m}\|^{2}_{l^{2}}+E^{m-1}_{e}\right),~~1\leq m\leq
 \frac{T}{\tau}-1.
 \end{align}
Summing above the inequalities up to $n$, and noticing (\ref{E0}), the following holds
\begin{align}
&E^{n}_{e}\lesssim   (\tau^{2}+h^{2})^{2}+ \tau(\ln\varepsilon^{2})^{2}\sum\limits_{m=0}^{n-1}E^{m}_{e},~~1\leq n\leq \frac{T}{\tau}-1.
\end{align}
By applying the discrete Gronwall's inequality \cite{holte2009discrete}, we have
\begin{align}
&E^{n}_{e}\lesssim e^{T(\ln(\varepsilon^{2}))^{2}}(\tau^{2}+h^{2})^{2},~~1\leq n\leq \frac{T}{\tau}-1.
\end{align}
Recalling $\|\delta_{x}^{+}e^{\varepsilon,n+1}\|^{2}_{l^{2}} + \|e^{\varepsilon,n+1}\|^{2}_{l^{2}}\leq 2E^{n}_{e}$ when $0<\varepsilon \ll1$, we can get the error estimate
\begin{align}
\|\delta_{x}^{+}e^{\varepsilon,n+1}\|^{2}_{l^{2}} + \|e^{\varepsilon,n+1}\|^{2}_{l^{2}}\lesssim 
&e^{T(\ln(\varepsilon^{2}))^{2}}(\tau^{2}+h^{2})^{2}.
\end{align}
By Sobolev inequality, we obtain
\begin{align}
\|e^{\varepsilon,n}\|^{2}_{l^{\infty}}\leq\|\delta_{x}^{+}e^{\varepsilon,n}\|^{2}_{l^{2}} + \|e^{\varepsilon,n}\|^{2}_{l^{2}}\lesssim 
&e^{T(\ln(\varepsilon^{2}))^{2}}(\tau^{2}+h^{2})^{2}.
\end{align}
Therefore, there exist $\tau_{3}>0,h_{2}>0$ sufficiently small. When $0<h<h_{2},0<\tau<\tau_{3}$, applying the triangle inequality, it implies that
\begin{align}
\|u^{\varepsilon,n}\|_{l^{\infty}}\leq \|u^{\varepsilon}(\cdot,t_{n})\|_{L^{\infty}(\Omega)}+\|e^{\varepsilon,n}\|_{l^{\infty}}\leq \Lambda +1.
\end{align}
We complete the proof by choosing $h_{0}=\min\{h_{1},h_{2}\},~\tau_{0}=\min\{\tau_{1},\tau_{2},\tau_{3}\}$.
\end{proof}

\subsection{The proof of Theorem for EFD}

Define the local trunction error for the EFD (\ref{EFD}) as
\begin{align}\label{EFDtrunerre}
\begin{split}
\xi^{0}_{j}:=&\delta_{t}^{+}u^{\varepsilon}(x_{j},0)-\gamma(x_{j})-\frac{\tau}{2}\left[ \delta^{2}_{x}\phi(x_{j})-\phi(x_{j})- \phi(x_{j})\ln(\varepsilon^{2}+(\phi(x_{j}))^{2})\right],~~j\in \mathcal{T}_{N},\\
\xi^{\varepsilon,n}_{j}:=&\delta_{t}^{2}u^{\varepsilon}(x_{j},t_{n})-\delta_{x}^{2} u^{\varepsilon}(x_{j},t_{n})+ u^{\varepsilon}(x_{j},t_{n})+ u^{\varepsilon}(x_{j},t_{n})f_{\varepsilon}\left( ( u^{\varepsilon}(x_{j},t_{n}))^{2} \right),~~1\leq n\leq \frac{T}{\tau}-1,
\end{split}
\end{align}
then we have the following bounds for the local trunction error.
\begin{lem}\label{EFDtrunerrlem}
Under the assumption (A), we have
\begin{align}
&\|\xi^{\varepsilon,0}\|_{H^{1}}\lesssim h^{2}+\tau^{2},\\
&\|\xi^{\varepsilon, n}\|_{l^{2}}\lesssim h^{2}+\tau^{2},\\
&\|\delta^{+}_{x}\xi^{\varepsilon, n}\|_{l^{2}}\lesssim h^{2}+\tau^{2},~~1\leq n \leq \frac{T}{\tau}-1.
\end{align}
\end{lem}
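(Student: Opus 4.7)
The plan is to follow the same template as the proof of Lemma \ref{trunerrlem}, taking advantage of the fact that the EFD truncation error is actually simpler than the SIFD one: because the Laplacian and the linear term are no longer time-averaged between $t_{n+1}$ and $t_{n-1}$, the expansion only involves the time-centered operator $\delta_t^2$ and the space-centered operator $\delta_x^2$ applied at the single node $(x_j,t_n)$, while the logarithmic term cancels exactly with its continuous counterpart. Concretely, since $u^\varepsilon$ solves the RLogKGE (\ref{RLogKGE1}) pointwise, I would write
\begin{align*}
\xi^{\varepsilon,n}_j = \bigl[\delta_t^2 u^\varepsilon(x_j,t_n)-\partial_{tt}u^\varepsilon(x_j,t_n)\bigr]-\bigl[\delta_x^2 u^\varepsilon(x_j,t_n)-\partial_{xx}u^\varepsilon(x_j,t_n)\bigr],
\end{align*}
so no nonlinear remainder appears in $\xi^{\varepsilon,n}_j$; the nonlinearity will only enter the subsequent error analysis through $\zeta^{\varepsilon,n}_j$, not here.

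Next, I would Taylor-expand both bracketed terms with integral remainders, obtaining a representation of the shape
\begin{align*}
\xi^{\varepsilon,n}_j = \tfrac{\tau^2}{12}\tilde\alpha^{\varepsilon,n}_j - \tfrac{h^2}{12}\tilde\eta^{\varepsilon,n}_j,
\end{align*}
with $\tilde\alpha^{\varepsilon,n}_j=\int_{-1}^1(1-|s|)^3\,\partial_t^4 u^\varepsilon(x_j,t_n+s\tau)\,ds$ and $\tilde\eta^{\varepsilon,n}_j=\int_{-1}^1(1-|s|)^3\,\partial_x^4 u^\varepsilon(x_j+sh,t_n)\,ds$. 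For the estimate of $\|\xi^{\varepsilon,0}\|_{H^1}$, the first-step initialization (\ref{initialvalue}) is identical to the SIFD case, so the bound $\|\xi^{\varepsilon,0}\|_{H^1}\lesssim\tau^2+h^2$ follows verbatim from the first part of the proof of Lemma \ref{trunerrlem}, invoking $\|\partial_t^3 u^\varepsilon\|_{L^\infty(0,T;H^1)}$ and $\|\partial_x^4\phi\|_{L^\infty}$ through assumption (A).

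To turn the pointwise bounds into $l^2$ bounds, I would reproduce the Cauchy--Schwarz / telescoping trick used in the SIFD proof to pass from $h\sum_j |\cdot|^2$ over grid nodes to an $L^2(\Omega)$-norm with an $O(h)$ correction controlled by the $H^1$-norm, yielding
\begin{align*}
\|\tilde\alpha^{\varepsilon,n}\|_{l^2}&\lesssim \|\partial_t^4 u^\varepsilon\|_{L^\infty(0,T;H^1(\Omega))},\\
\|\tilde\eta^{\varepsilon,n}\|_{l^2}&\lesssim \|u^\varepsilon\|_{L^\infty(0,T;H^4(\Omega))},
\end{align*}
for $h\le 1$. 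Plugging these into the representation of $\xi^{\varepsilon,n}_j$ and using assumption (A) delivers $\|\xi^{\varepsilon,n}\|_{l^2}\lesssim \tau^2+h^2$. The bound on $\|\delta_x^+\xi^{\varepsilon,n}\|_{l^2}$ is obtained by the same argument after applying $\delta_x^+$ to the integral representations, which simply raises the required spatial regularity of $u^\varepsilon$ by one derivative (to $H^5$ and $\partial_t^4 u^\varepsilon\in H^2$, $\partial_t^2 u^\varepsilon\in H^4$), all of which are provided by assumption (A).

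No step here is really an obstacle: the only mildly delicate point is the discrete-to-continuous passage in the $l^2$ estimates of the remainders, but this is exactly the Cauchy--Schwarz--plus--fundamental-theorem-of-calculus maneuver already executed for $\alpha^{\varepsilon,n}$ and $\eta^{\varepsilon,n}$ in Lemma \ref{trunerrlem}, and it carries over mutatis mutandis. Consequently the proof reduces to a direct transcription of the SIFD argument with the time-averaging terms removed.
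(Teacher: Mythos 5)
Your proposal follows essentially the same route as the paper's own proof: the identical decomposition of $\xi^{\varepsilon,n}_j$ into the two centered-difference remainders with the logarithmic term cancelling exactly, the same integral-form Taylor remainders, the same Cauchy--Schwarz/telescoping passage to $l^2$ bounds via assumption (A), and the same verbatim reuse of the first part of Lemma \ref{trunerrlem} for $\|\xi^{\varepsilon,0}\|_{H^{1}}$. If anything, your spatial remainder is written more carefully than the paper's, which displays $\partial_x^{3}u^{\varepsilon}$ in $\beta^{\varepsilon,n}_j$ (an apparent typo, since the second-order central difference remainder involves $\partial_x^{4}u^{\varepsilon}$, consistent with your $\tilde\eta^{\varepsilon,n}_j$); the differing harmless constants ($h^{2}/12$ vs.\ $h^{2}/6$) and your superfluous mention of $\partial_t^{2}u^{\varepsilon}\in H^{4}$ do not affect correctness.
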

\begin{proof}
According to the Lemma \ref{trunerrlem}, we have
\begin{align}
\|\xi^{\varepsilon,0}\|_{H^{1}}\lesssim h^{2}+\tau^{2}.
\end{align}
Noting that
\begin{align}
\begin{split}
\xi^{\varepsilon,n}_{j}:=&\delta_{t}^{2}u^{\varepsilon}(x_{j},t_{n})-\delta_{x}^{2} u^{\varepsilon}(x_{j},t_{n})+ u^{\varepsilon}(x_{j},t_{n})+ u^{\varepsilon}(x_{j},t_{n})f_{\varepsilon}\left( ( u^{\varepsilon}(x_{j},t_{n}))^{2} \right)\\
&-[ \partial_{tt}u^{\varepsilon}(x_{j},t_{n})-\partial_{xx} u^{\varepsilon}(x_{j},t_{n}) + u^{\varepsilon}(x_{j},t_{n})+ u^{\varepsilon}(x_{j},t_{n})f_{\varepsilon}\left( ( u^{\varepsilon}(x_{j},t_{n}))^{2} \right) ]\\
=&\left[ \delta_{t}^{2}u^{\varepsilon}(x_{j},t_{n})-\partial_{tt}u^{\varepsilon}(x_{j},t_{n})\right] - \left[\delta_{x}^{2} u^{\varepsilon}(x_{j},t_{n})-\partial_{xx} u^{\varepsilon}(x_{j},t_{n})\right].
\end{split}
\end{align}
Taking the Taylor expansion, we obtain
\begin{align}
\xi^{\varepsilon,n}_{j}=\frac{\tau^{2}}{12}\alpha^{\varepsilon,n}_{j}+\frac{h^{2}}{6}\beta^{\varepsilon,n}_{j},
\end{align}
where
\begin{align}
&\alpha^{\varepsilon,n}_{j}=\int^{1}_{-1}(1-|s|)^{3}\partial^{4}_{t} u^{\varepsilon}(x_{j},t_{n}+s\tau)ds,~~\beta^{\varepsilon,n}_{j}=\int^{1}_{-1}(1-|s|)^{3}\partial^{3}_{x} u^{\varepsilon}(x_{j}+sh,t_{n})ds.
\end{align}
By Cauchy-Schwarz inequality, we obtain
\begin{align}
\begin{split}
\|\alpha^{\varepsilon,n}\|^{2}_{l^{2}}&=h\sum\limits^{N-1}_{j=1}|\alpha^{\varepsilon,n}_{j}|^{2}\leq h\int^{1}_{-1}(1-|s|)^{6}ds\sum\limits^{N-1}_{j=1}\int^{1}_{-1}|\partial^{4}_{t} u^{\varepsilon}(x_{j},t_{n}+s\tau)|^{2}ds\\
&=\frac{2}{7}\left[ \int^{1}_{-1}\|\partial^{4}_{t} u^{\varepsilon}(\cdot,t_{n}+s\tau)\|^{2}_{L^{2}(\Omega)}ds- \int^{1}_{-1}\sum\limits^{N-1}_{j=0} \int^{x_{j+1}}_{x_{j}}\int^{\omega}_{x_{j}}\partial_{x} |\partial^{4}_{t} u^{\varepsilon}(\hat{x},t_{n}+s\tau)|^{2}d\hat{x}d\omega ds \right]\\
&\leq \frac{2}{7}\int^{1}_{-1}\left[ \|\partial^{4}_{t} u^{\varepsilon}(\cdot,t_{n}+s\tau)\|^{2}_{L^{2}(\Omega)} +2h\|\partial^{4}_{t}\partial_{x} u^{\varepsilon}(\cdot,t_{n}+s\tau)\|_{L^{2}(\Omega)} \|\partial^{4}_{t} u^{\varepsilon}(\cdot,t_{n}+s\tau)\|_{L^{2}(\Omega)} \right] ds\\
&\leq \max_{0\leq t\leq T}\left( \|\partial^{4}_{t} u^{\varepsilon}\|_{L^{2}(\Omega)}+h \|\partial^{4}_{t}\partial_{x} u^{\varepsilon}\|_{L^{2}(\Omega)} \right)^{2},
\end{split}
\end{align}
which implies that when $h\leq 1,$
\begin{align}
\left\|\alpha^{\varepsilon, n}\right\|_{l^{2}} \leq \left\|\partial_{t}^{4} u^{\varepsilon}\right\|_{L^{\infty}\left(0, T ; H^{1}(\Omega)\right)}.
\end{align}
On the other hand, it can be estimated that
\begin{align}
\begin{split}
\|\beta^{\varepsilon, n}\|^{2}_{l^{2}} & \leq h \int_{-1}^{1}(1-|s|)^{6} d s \sum_{j=1}^{N-1} \int_{-1}^{1}\left|\partial_{x}^{3} u^{\varepsilon}\left(x_{j}+s h, t_{n}\right)\right|^{2} d s \\
& \leq \frac{2 h}{7} \sum_{j=1}^{N-1} \int_{-1}^{1}\left|\partial_{x}^{3} u^{\varepsilon}\left(x_{j}+s h, t_{n}\right)\right|^{2} d s \\
& \leq \frac{4}{7}\left(\left\|\partial_{x}^{3} u^{\varepsilon}\left(\cdot, t_{n}\right)\right\|_{L^{2}(\Omega)}^{2}\right) \\
& \leq \left\|u^{\varepsilon}\right\|_{L^{\infty}\left(0, T ; H^{3}(\Omega)\right)}^{2},
\end{split}
\end{align}
which yields that $\|\beta^{\varepsilon, n}\|_{l^{2}}\leq \left\|u^{\varepsilon}\right\|_{L^{\infty}\left(0, T ; H^{3}(\Omega)\right)}$.
Therefore, according to assumption (A), we get
\begin{align}
\begin{split}
\left\|\xi^{\varepsilon, n}\right\|_{l^{2}} & \leq \frac{\tau^{2}}{12}\left\|\partial_{t}^{4} u^{\varepsilon}\right\|_{L^{\infty}\left(0, T ; H^{1}(\Omega)\right)}+\frac{h^{2}}{6}\left\|u^{\varepsilon}\right\|_{L^{\infty}\left(0, T ; H^{3}(\Omega)\right)} \\
& \lesssim \tau^{2}+h^{2}.
\end{split}
\end{align}
With the same method, we have
\begin{align}
\begin{split}
\|\delta^{+}_{x}\xi^{\varepsilon, n}\|_{l^{2}}& \leq \frac{\tau^{2}}{12}\left\|\partial_{t}^{4} u^{\varepsilon}\right\|_{L^{\infty}\left(0, T ; H^{2}(\Omega)\right)}+\frac{h^{2}}{6}\left\|u^{\varepsilon}\right\|_{L^{\infty}\left(0, T ; H^{4}(\Omega)\right)} \\
& \lesssim \tau^{2}+h^{2}.
\end{split}
\end{align}
This completes the proof.
\end{proof}

Subtracting (\ref{EFD}) from  (\ref{EFDtrunerre}), the error $e^{\varepsilon,n}_{j}$ satisfies
\begin{subequations}\label{EFDerror}
\begin{align}
&\delta_{t}^{2}e^{\varepsilon,n}_{j}-\delta_{x}^{2} e^{\varepsilon,n}_{j}+ e^{\varepsilon,n}_{j}=\xi^{\varepsilon,n}_{j} - \zeta^{\varepsilon,n}_{j},\label{EFDerrora}\\
&e^{\varepsilon,n}_{0}=e^{\varepsilon,n}_{N},~~e^{\varepsilon,n}_{-1}=e^{\varepsilon,n}_{N-1},~~n=0,1,\ldots\label{EFDerrorb}\\
&e^{\varepsilon,0}_{j}=0,~~e^{\varepsilon,1}_{j}=\tau\xi^{0}_{j},~~j\in \mathcal{T}_{N},\label{EFDerrorc}
\end{align}
\end{subequations}
where
\begin{align}
\zeta^{\varepsilon,n}_{j}=u^{\varepsilon}(x_{j},t_{n})f_{\varepsilon}\left( ( u^{\varepsilon}(x_{j},t_{n}))^{2} \right)-u^{\varepsilon,n}_{j}f_{\varepsilon}\left( ( u^{\varepsilon,n}_{j})^{2} \right).
\end{align}
We define the ``energy'' for the error vector $e^{\varepsilon,n}~(n=0,1,\ldots)$ as
\begin{align}
E^{n}_{e}:=&(1-\frac{\tau^{2}}{2}-\frac{\tau^{2}}{h^{2}})\|\delta_{t}^{+}e^{\varepsilon,n}\|^{2}_{l^{2}}+\frac{1}{2}\left( \|e^{\varepsilon,n}\|^{2}_{l^{2}}+\|e^{\varepsilon,n+1}\|^{2}_{l^{2}} \right)+ \frac{1}{2h}\sum\limits_{j=0}^{N-1}\left[ (e^{\varepsilon,n+1}_{j+1}-e^{\varepsilon,n}_{j})^{2} + (e^{\varepsilon,n}_{j+1}-e^{\varepsilon,n+1}_{j})^{2}\right].
\end{align}
Besides, we can get that
\begin{align}\label{EFDE0}
 E^{0}_{e}:=&(1-\frac{\tau^{2}}{2}-\frac{\tau^{2}}{h^{2}})\|\delta_{t}^{+}e^{\varepsilon,0}\|^{2}_{l^{2}}+(\frac{1}{2}+\frac{1}{h^{2}}) \|e^{\varepsilon,1}\|^{2}_{l^{2}}=\|\xi^{0}\|^{2}_{l^{2}}\lesssim (\tau^{2}+h^{2})^{2}
 .
\end{align}
\begin{proof} ($\mathbf{Proof~of~Theorem~{\ref{EFDerr}}}$)
When $m=1$,
under assumption (A), by Lemma \ref{trunerrlem} we can conclude the first step errors of the discretization (\ref{initialvalue})
\begin{align}
e^{\varepsilon,0}=0,~~\|e^{\varepsilon,1}_{j}\|_{H^{1}}\lesssim\tau^{2}+h^{2},
\end{align}
for sufficiently small $0<\tau<\tau_{1}$ and $0<h<h_{1}$. So it is true for $m=0, 1$.
Assume (\ref{SIFD2error}) is valid for $m\leq n\leq \frac{T}{\tau}-1$. Next, we need to verify (\ref{SIFD2error}) is true for $m=n+1$.
Denote
\begin{align}
\zeta^{\varepsilon,m}_{j}=u^{\varepsilon}(x_{j},t_{m})f_{\varepsilon}\left( ( u^{\varepsilon}(x_{j},t_{m}))^{2} \right)-u^{\varepsilon,m}_{j}f_{\varepsilon}\left( ( u^{\varepsilon,m}_{j})^{2} \right).
\end{align}
With the same method in Theorem \ref{SIFD2err}, we have
\begin{align}\label{EFDnonlinearerror}
\|\zeta^{\varepsilon,m}\|^{2}_{l^{2}}\lesssim \| e^{\varepsilon,m}\|^{2}_{l^{2}}(\ln\varepsilon^{2})^{2}.
\end{align}
Besides, under the assumption $\tau \leq \frac{1}{2}\min\{1,h\}$, we have $1-\frac{\tau^{2}}{2}-\frac{\tau^{2}}{h^{2}}\geq \frac{1}{4}>0$.
By
\begin{align}
\left\|\delta_{x}^{+} e^{m+1}\right\|_{l^{2}}^{2}=\frac{1}{h} \sum_{j=0}^{N-1}\left(e_{j+1}^{m+1}-e_{j}^{m}-\tau \delta_{t}^{+} e_{j}^{m}\right)^{2} \leq \frac{2}{h} \sum_{j=0}^{N-1}\left(e_{j+1}^{m+1}-e_{j}^{m}\right)^{2}+\frac{2 \tau^{2}}{h^{2}}\left\|\delta_{t}^{+} e^{m}\right\|_{l^{2}}^{2},
\end{align}
we have
\begin{align}
E^{m}_{e} \geq \frac{1}{4}\left\|\delta_{x}^{+} e^{m+1}\right\|_{l^{2}}^{2}+\frac{1}{2}\left(\left\|e^{m}\right\|_{l^{2}}^{2}+\left\|e^{m+1}\right\|_{l^{2}}^{2}\right), \quad 1 \leq m \leq n-1.
\end{align}
Similar to the proof of Theorem \ref{SIFD2err}, there exists a $\tau_{2}>0$ sufficiently small, when $0<\tau\leq\tau_{2}$, we get
\begin{align}
E^{m}_{e}\lesssim 
&e^{T(\ln(\varepsilon^{2}))^{2}}(\tau^{2}+h^{2})^{2}, \quad 1 \leq m \leq n-1.
\end{align}
Therefore, we can get the $(n+1)th$ error estimate
\begin{align}
\|\delta_{x}^{+}e^{\varepsilon,n+1}\|^{2}_{l^{2}} + \|e^{\varepsilon,n+1}\|^{2}_{l^{2}}\lesssim 
&e^{T(\ln(\varepsilon^{2}))^{2}}(\tau^{2}+h^{2})^{2}.
\end{align}
By Sobolev inequality, we obtain
\begin{align}
\|e^{\varepsilon,n}\|^{2}_{l^{\infty}}\leq\|\delta_{x}^{+}e^{\varepsilon,n}\|^{2}_{l^{2}} + \|e^{\varepsilon,n}\|^{2}_{l^{2}}\lesssim 
&e^{T(\ln(\varepsilon^{2}))^{2}}(\tau^{2}+h^{2})^{2}.
\end{align}
Applying the triangle inequality, it implies that
\begin{align}
\|u^{\varepsilon,n}\|_{l^{\infty}}\leq \|u^{\varepsilon}(\cdot,t_{n})\|_{L^{\infty}(\Omega)}+\|e^{\varepsilon,n}\|_{l^{\infty}}\leq \Lambda +1.
\end{align}
This ends the proof  by choosing $h_{0}=h_{1},\tau_{0}=\min\{\tau_{1},\tau_{2}\}$.
\end{proof}
\section{Numerical results}\label{conclusion}
In this section, we represent some numerical experiments of the EFD (\ref{EFD}) scheme to quantify the error bounds. Since the results of the SIFD (\ref{SIFD2}) are similar to the EFD (\ref{EFD}), we omit the details here for brevity. Here we take $d=1, \lambda=1$ and we define the error functions as:
\begin{align}
&\hat{e}^{\varepsilon}(t_{n}):=u(\cdot,t_{n})-u^{\varepsilon}(\cdot,t_{n}),
\quad e^{\varepsilon}(t_{n}):=u^{\varepsilon}(\cdot,t_{n})-u^{\varepsilon,n}, \quad \tilde{e}^{\varepsilon}(t_{n}):=u(\cdot,t_{n})-u^{\varepsilon,n}.
\end{align}
Besides, we denote the error functions:
\begin{align}
&e^{\varepsilon}_{\infty}(t_{n}):=\|u^{\varepsilon}(\cdot,t_{n})-u^{\varepsilon,n}\|_{l^{\infty}}, \quad  e^{\varepsilon}_{2}(t_{n}):=\|u^{\varepsilon}(\cdot,t_{n})-u^{\varepsilon,n}\|_{l^{2}}, \\
&e^{\varepsilon}_{H^{1}}(t_{n}):=\sqrt{(e^{\varepsilon}_{2}(t_{n}))^{2}+\|\delta^{+}_{x}(u^{\varepsilon}(\cdot,t_{n})-u^{\varepsilon,n})\|^{2}_{l^{2}}}.
\end{align}
Here $u, u^{\varepsilon}$ are the exact solutions of the LogKGE (\ref{LogKGE}) and the RLogKGE (\ref{RLogKGE}), $u^{n}, u^{\varepsilon,n}$ are the numerical solutions of the LogKGE (\ref{LogKGE}) and the RLogKGE (\ref{RLogKGE}).

$\mathbf{Example~1}$. The initial datum is taken as $\phi(x)=e^{-\frac{k^{2}x^{2}}{2(c^{2}-k^{2})}},\gamma(x)=\frac{ckx}{c^2-k^2}e^{-\frac{(kx)^{2}}{2(c^{2}-k^{2})}}$, and the Gaussian solitary wave solution is
\begin{align}
u(x,t)=e^{-\frac{(kx-ct)^{2}}{2(c^{2}-k^{2})}},
\end{align}
where $c = 2, k=1$. The RLogKGE (\ref{RLogKGE}) is simulated on the domain $\Omega=[-16,16]$. The `exact' solution $u^{\varepsilon}$ is obtained numerically by the EFD (\ref{EFD}) scheme with $\varepsilon=10^{-7}$.

$\mathbf{Example~2}$. We take the initial value as $\phi(x)=\frac{2}{e^{-x^{2}}+e^{x^{2}}},\gamma(x)=0$.  The computation domain is chosen as $\Omega=[-16,16]$ with periodic boundary conditions. Since the analytical solution is not available in this example. The `exact' solution $u^{\varepsilon}$ is obtained by the EFD (\ref{EFD}) with a small mesh size $h=2^{-10}$, and time step $\tau=0.01\times2^{-9}$. In addition, the `exact' solution $u$ is approximated by $u^{\varepsilon}$ with $\varepsilon=10^{-7}$.
\subsection{Convergence of the regularized model}
Here we test the order of accuracy of the regularized model, that is the convergence rate between the solutions of the RLogKGE (\ref{RLogKGE}) and the LogKGE (\ref{LogKGE}). Figure \ref{fig:regexhat} represents $\|\hat{e}^{\varepsilon}\|_{l^{2}}, \|\hat{e}^{\varepsilon}\|_{l^{\infty}},\|\hat{e}^{\varepsilon}\|_{H^{1}}$ with the scheme EFD (\ref{EFD}) for Example~1 and Example~2. The errors are displayed at $T=0.5$.

From Figure \ref{fig:regexhat}, we can observe that the solutions of the RLogKGE (\ref{RLogKGE}) are linearly convergent to the LogKGE (\ref{LogKGE}) with regard to $\varepsilon$, and the convergence rate is $O(\varepsilon)$ in the $l^{2}$-norm, $l^{\infty}$-norm, $H^{1}$-norm.
\begin{center}
\begin{figure}[htbp]
\centering
\subfigure{\includegraphics[width=.43\textwidth, height=0.38\textwidth]{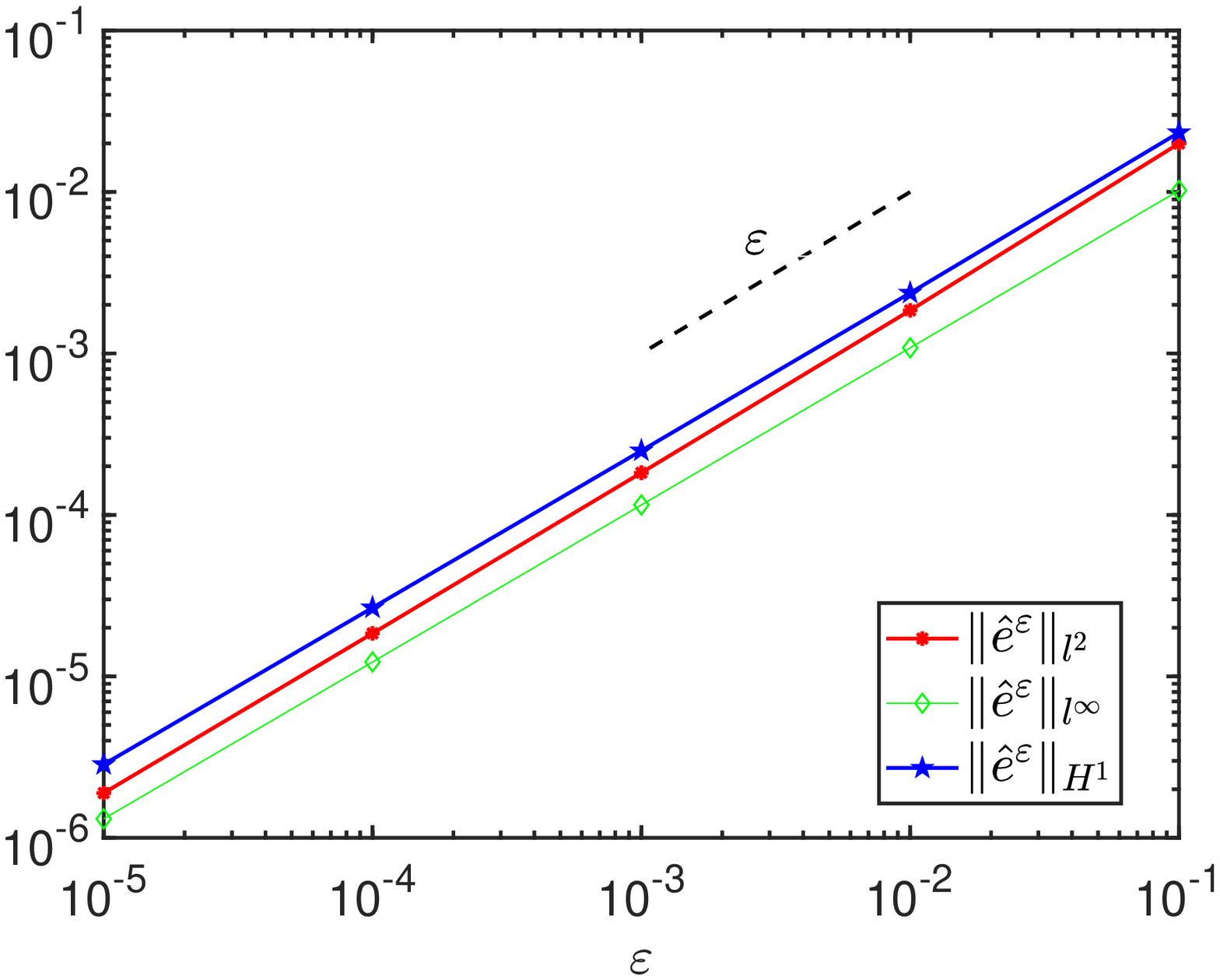}}~~~~~~~
\subfigure{\includegraphics[width=.43\textwidth, height=0.38\textwidth]{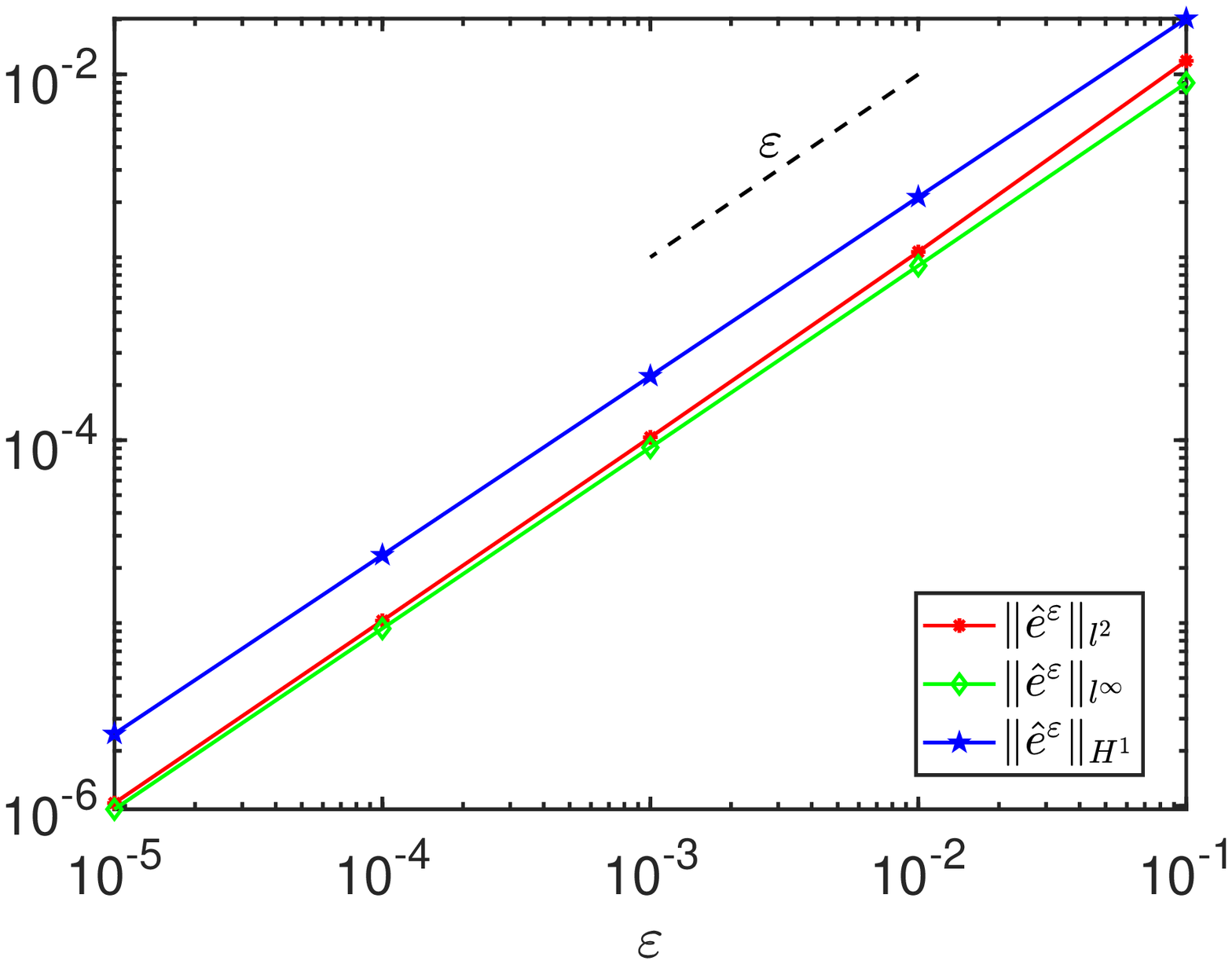}}
\caption{The errors $\hat{e}^{\varepsilon}(0.5)$ in three different norms with the scheme  EFD $(\ref{EFD})$ for Example~1 (left) and Example~2 (right).}
\label{fig:regexhat}
\end{figure}
\end{center}
\subsection{Convergence of FDTD to the RLogKGE}
Then we check the convergence patterns of the finite difference scheme: EFD (\ref{EFD}) to the RLogKGE (\ref{RLogKGE})  for various mesh size $h$, time step $\tau$ under any fixed
parameter $0<\varepsilon\ll1$ for Example~1 and Example~2.

Firstly, we perform test on the temporal errors with the EFD (\ref{EFD}) in the $l^{2}$-norm, $l^{\infty}$-norm, $H^{1}$-norm at $T=1$, depicted in Figure \ref{fig:efdcase2error}. Due to the stability condition of the EFD (\ref{EFD}), we set $0< \tau <\min\{\frac{1}{2}, \frac{h}{2}\}$, varying the mesh size and time step simultaneously as $\tau_{j}=0.01\times2^{-j}, h_{j}=2^{-j}$ for $j=1,\ldots,7$.

Secondly, for spatial accuracy of the EFD (\ref{EFD}) at $T=1$, we set time step $\tau=\tau_{e}=0.01\times{2^{-9}}$, such that the errors from the time discretization are ignored and solve the RLogKGE (\ref{RLogKGE}) with the FDTD schemes versus mesh size $h$. The results are displayed in Figure \ref{fig:regefdcase2spaceerr}. Figure \ref{fig:regefdcase2spaceerr} depict $\|e^{\varepsilon}\|_{l^{2}}, \|e^{\varepsilon}\|_{l^{\infty}},\|e^{\varepsilon}\|_{H^{1}}$ with different $h$ of the scheme EFD (\ref{EFD}) for Example~1 and Example~2.

From Figure \ref{fig:regefdcase2spaceerr}, we can make the observations:
the scheme EFD (\ref{EFD}) are uniformly second order accurate for the RLogKGE (\ref{RLogKGE}) for any $0<\varepsilon\ll 1$ which demonstrate the Theorem \ref{EFDerr}.
\begin{center}
\begin{figure}[htbp]
\centering
\subfigure{\includegraphics[width=.33\textwidth, height=0.30\textwidth]{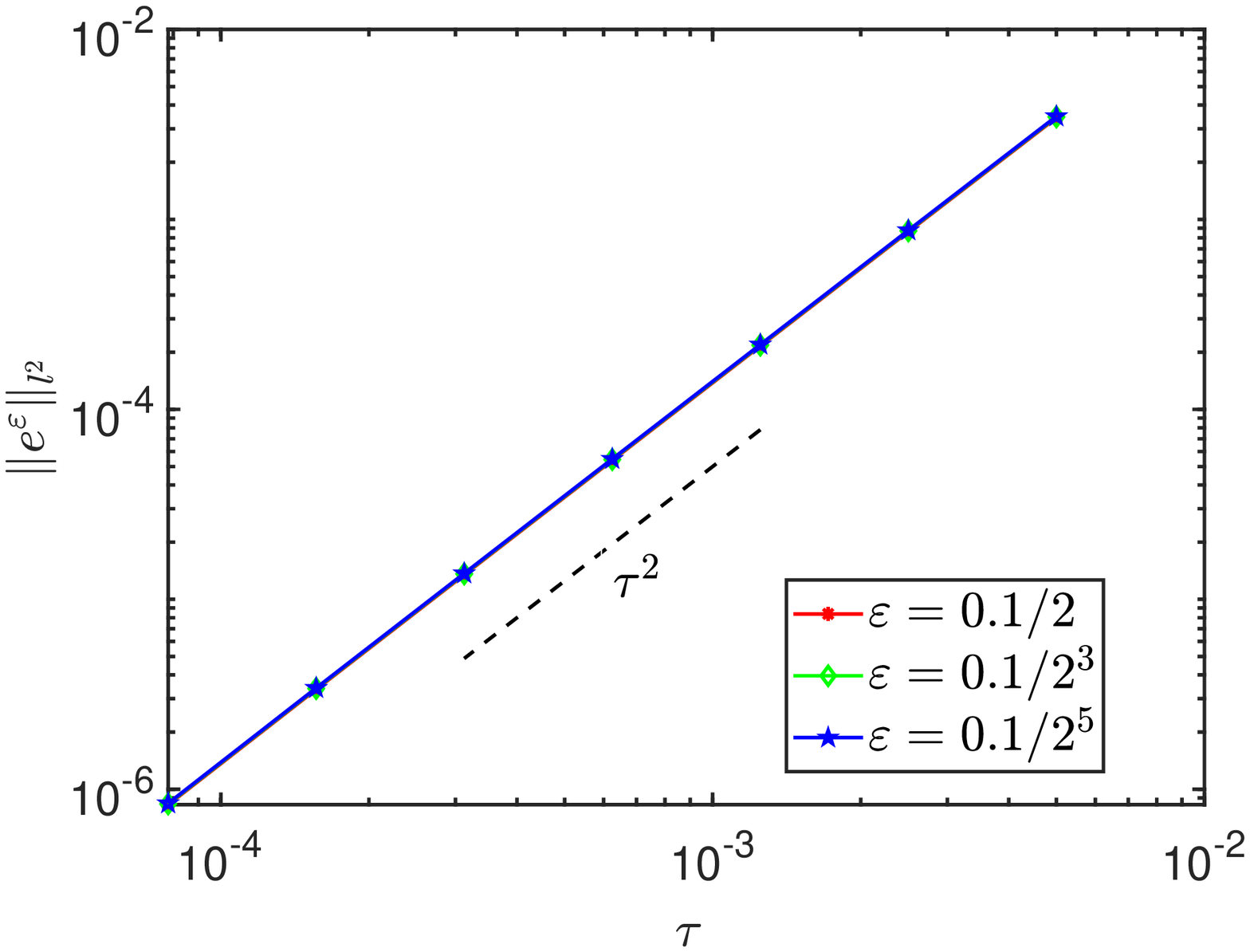}}~~~
\subfigure{\includegraphics[width=.33\textwidth, height=0.30\textwidth]{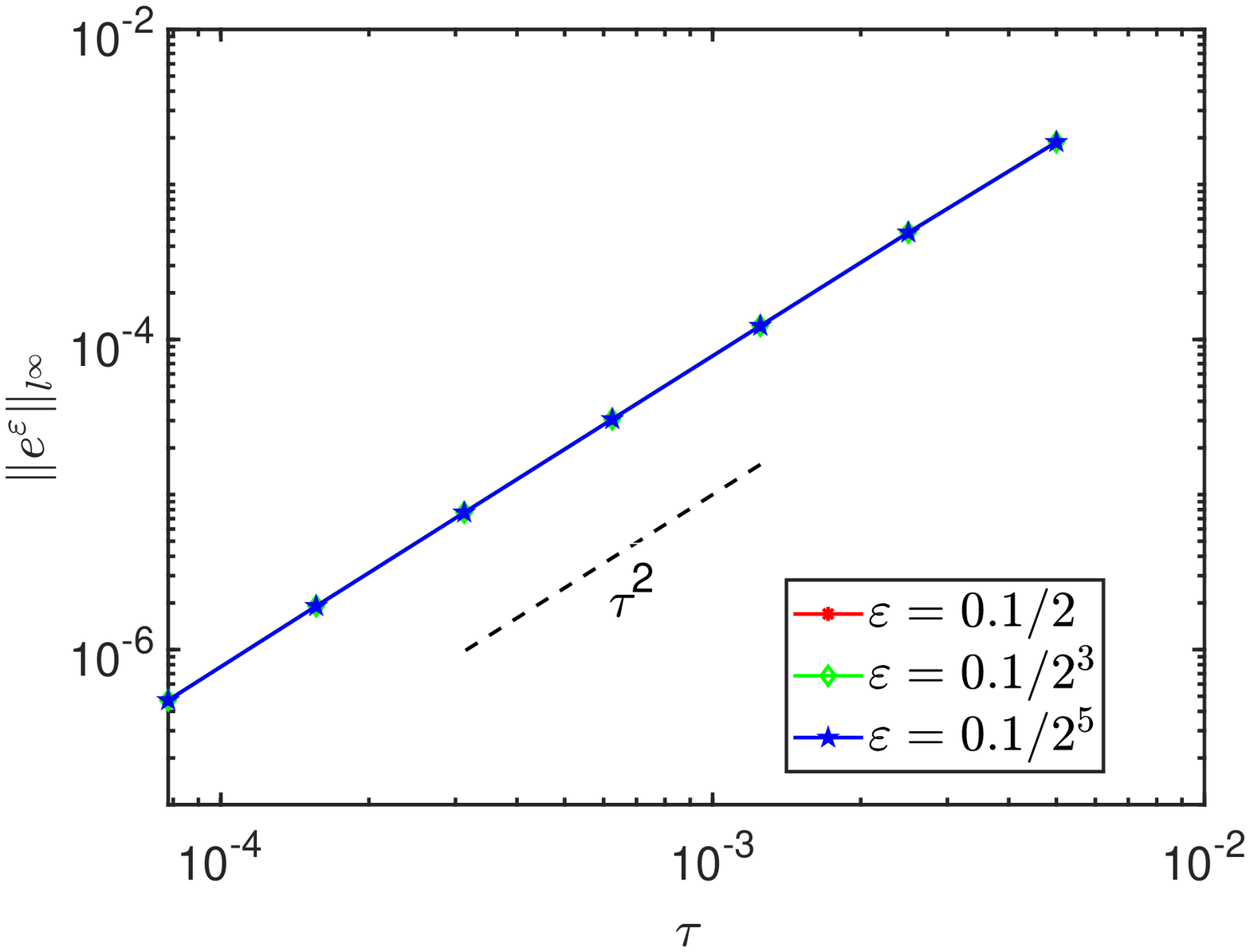}}~~~
\subfigure{\includegraphics[width=.33\textwidth, height=0.30\textwidth]{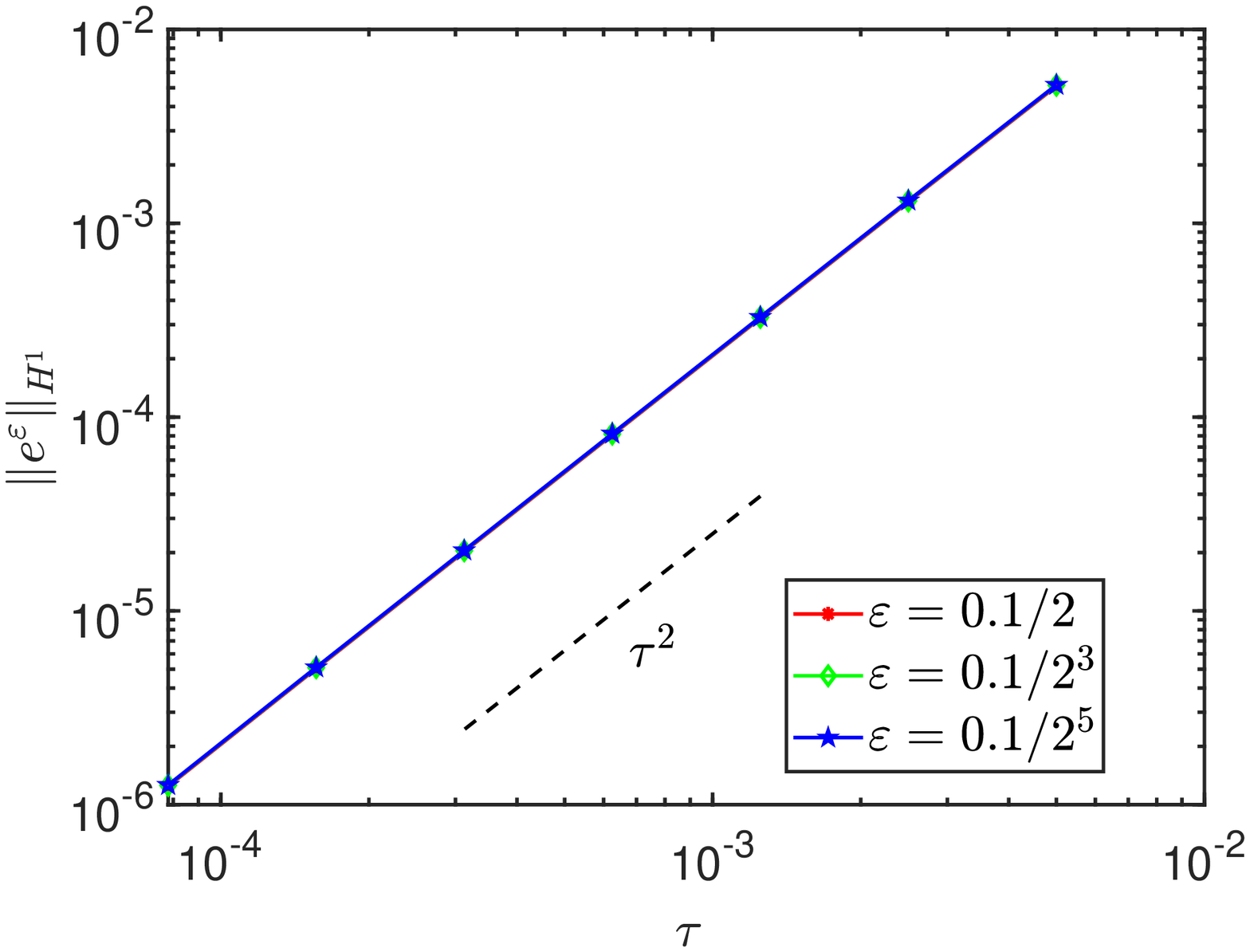}}
\caption{The temporal errors $e^{\varepsilon}(1)$ in three different norms for Example~1.}
\label{fig:regefderr}
\end{figure}
\end{center}
\begin{center}
\begin{figure}[htbp]
\centering
\subfigure{\includegraphics[width=.33\textwidth, height=0.30\textwidth]{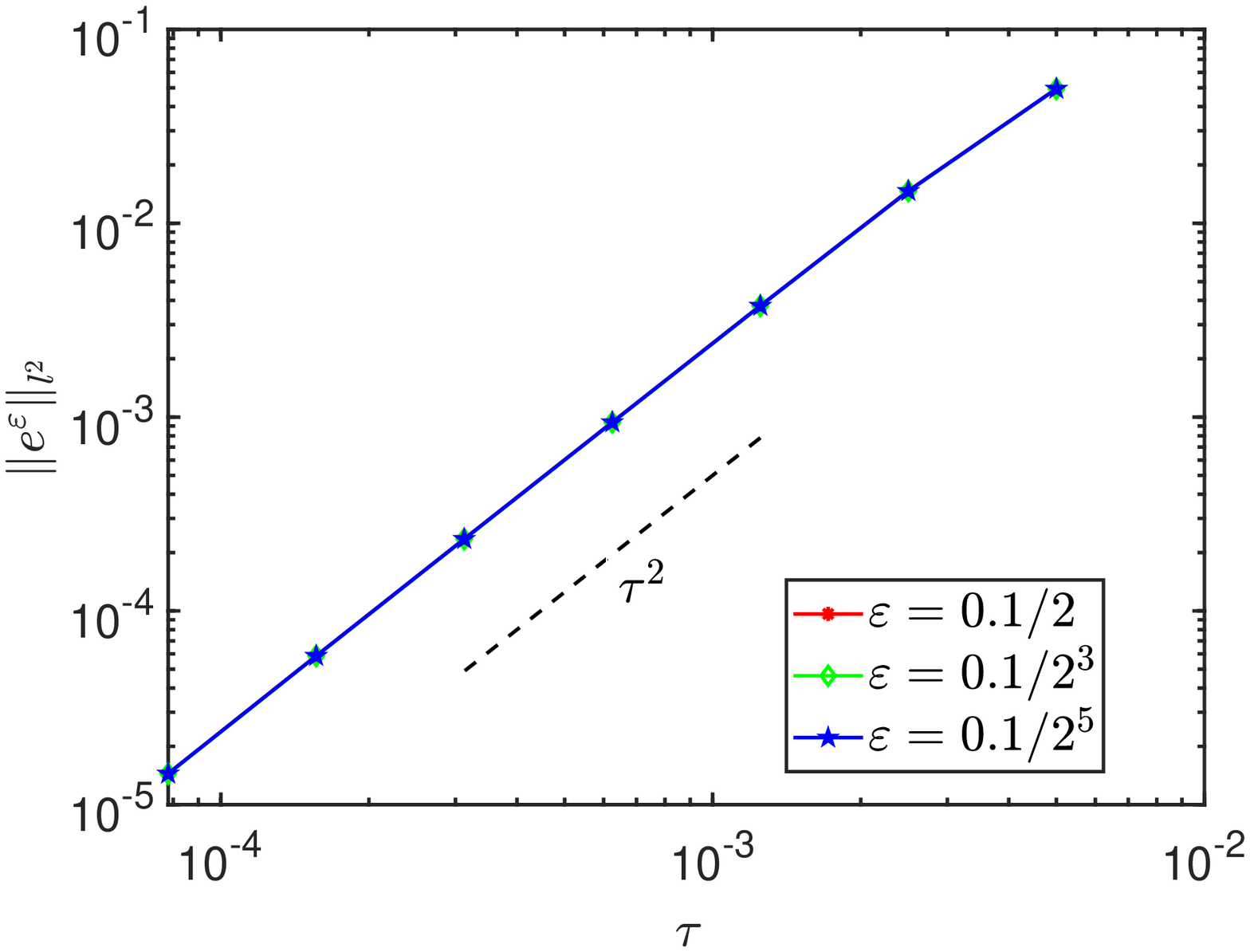}}~~
\subfigure{\includegraphics[width=.33\textwidth, height=0.30\textwidth]{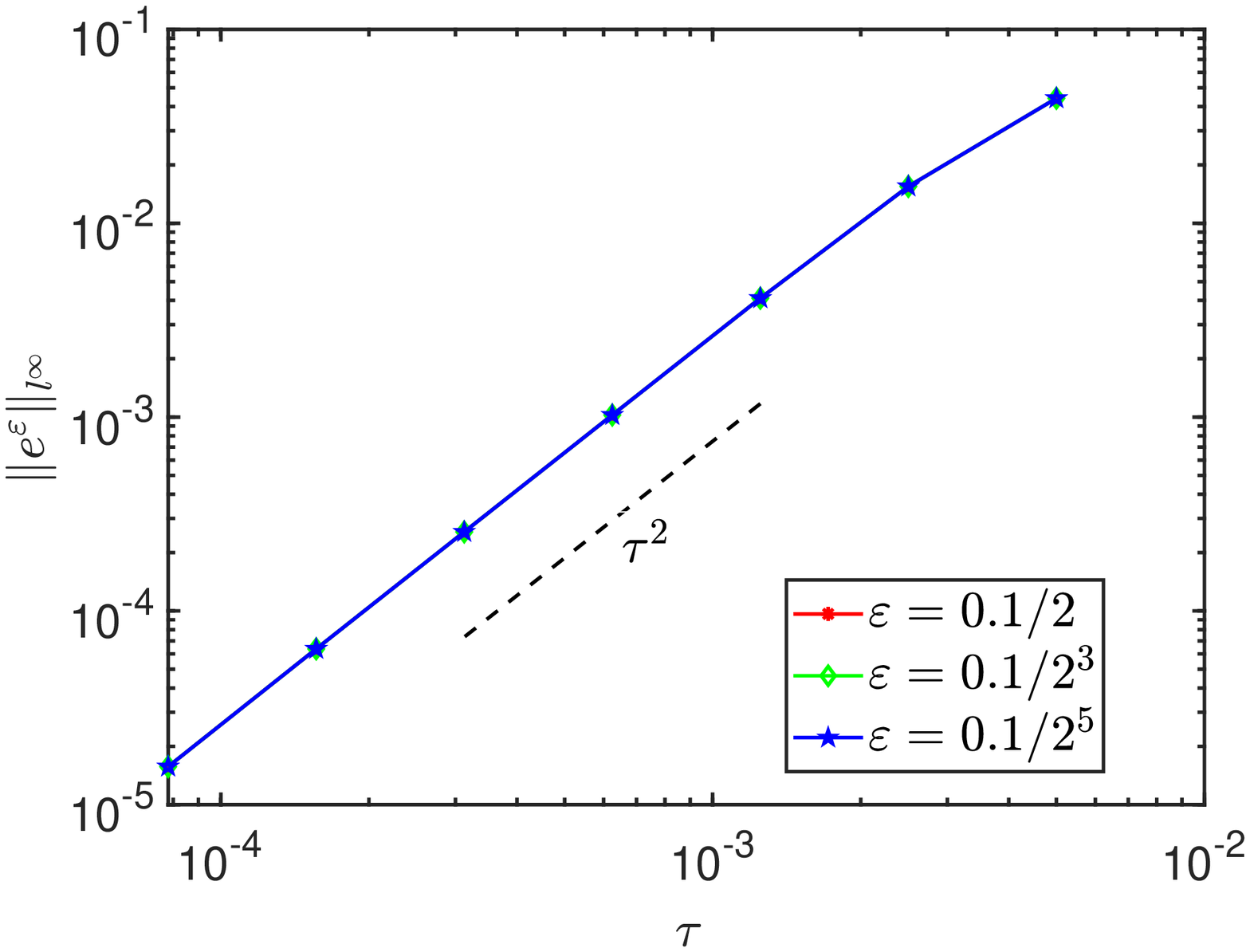}}~~
\subfigure{\includegraphics[width=.33\textwidth, height=0.30\textwidth]{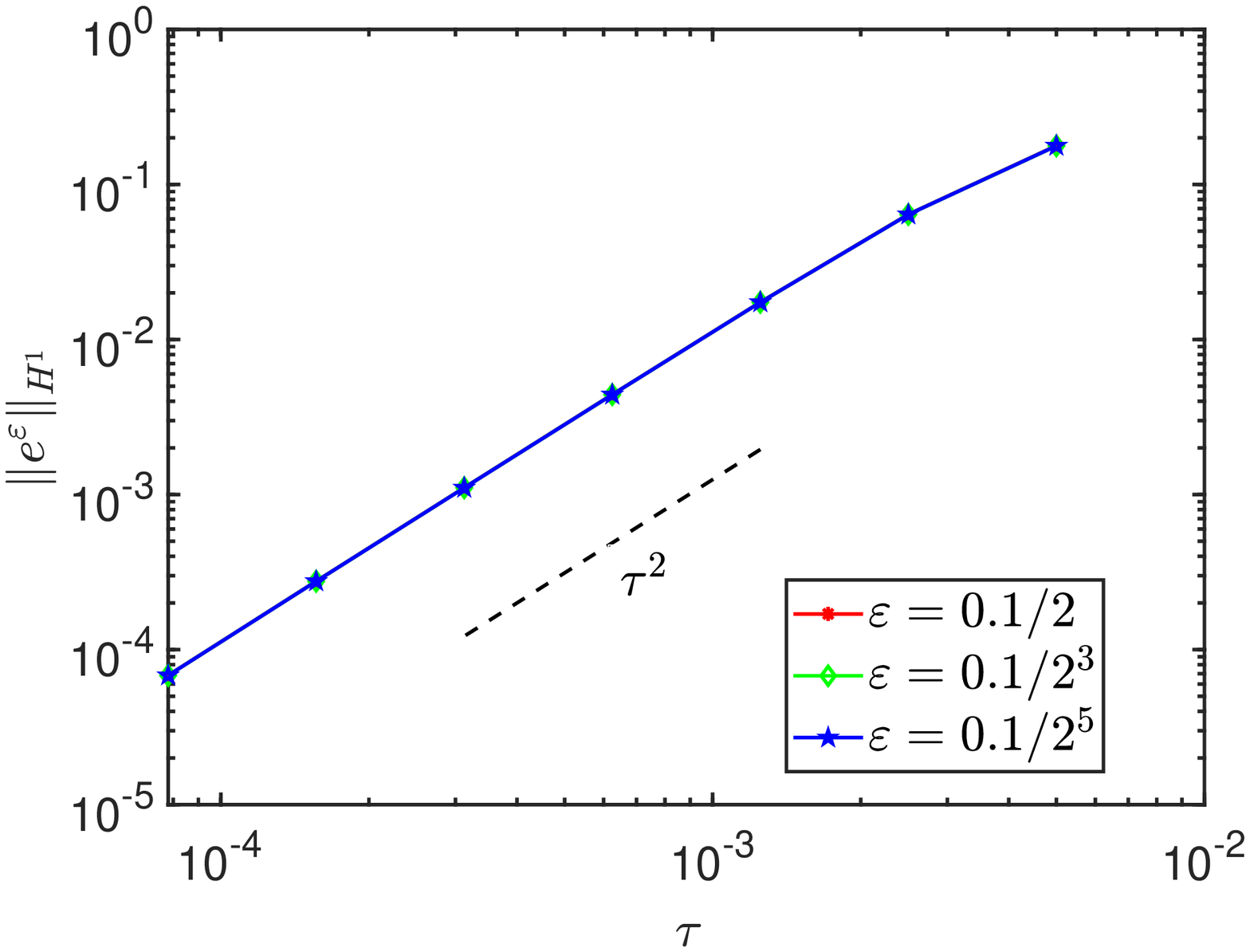}}
\caption{The temporal errors $e^{\varepsilon}(1)$ in three different norms for Example~2.}
\label{fig:efdcase2error}
\end{figure}
\end{center}
\begin{center}
\begin{figure}[htbp]
\centering
\subfigure{\includegraphics[width=.33\textwidth, height=0.30\textwidth]{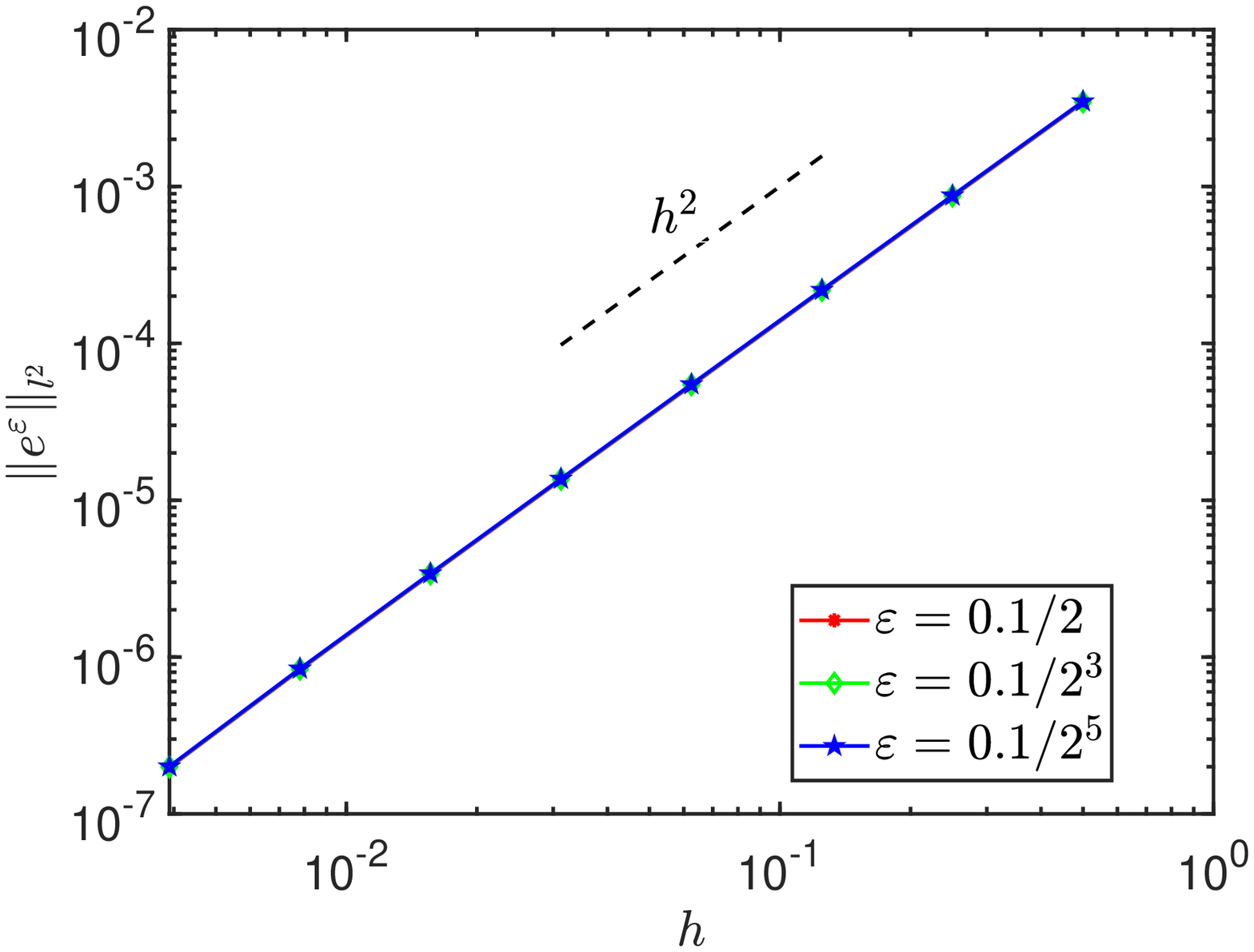}}~~~
\subfigure{\includegraphics[width=.33\textwidth, height=0.30\textwidth]{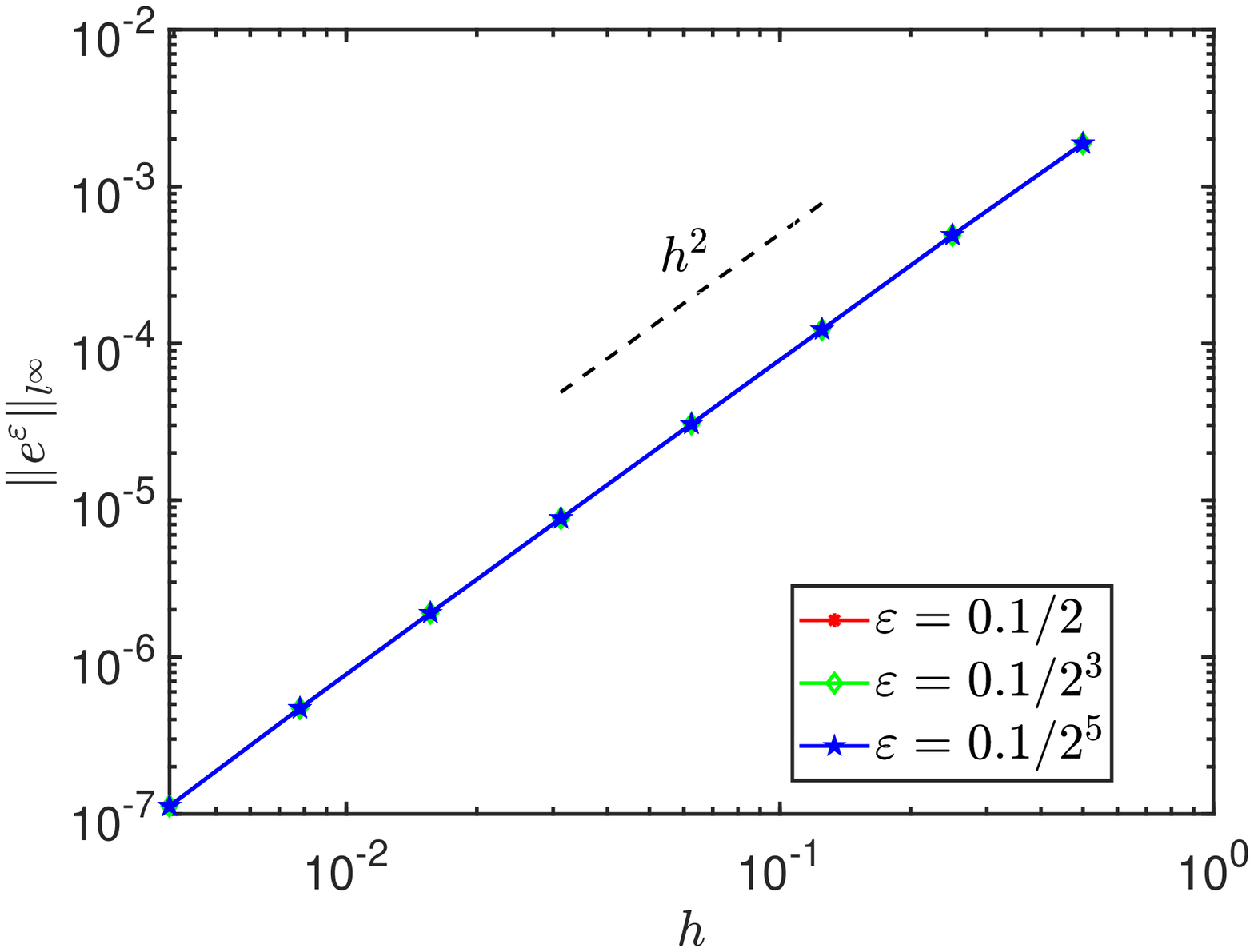}}~~~
\subfigure{\includegraphics[width=.33\textwidth, height=0.30\textwidth]{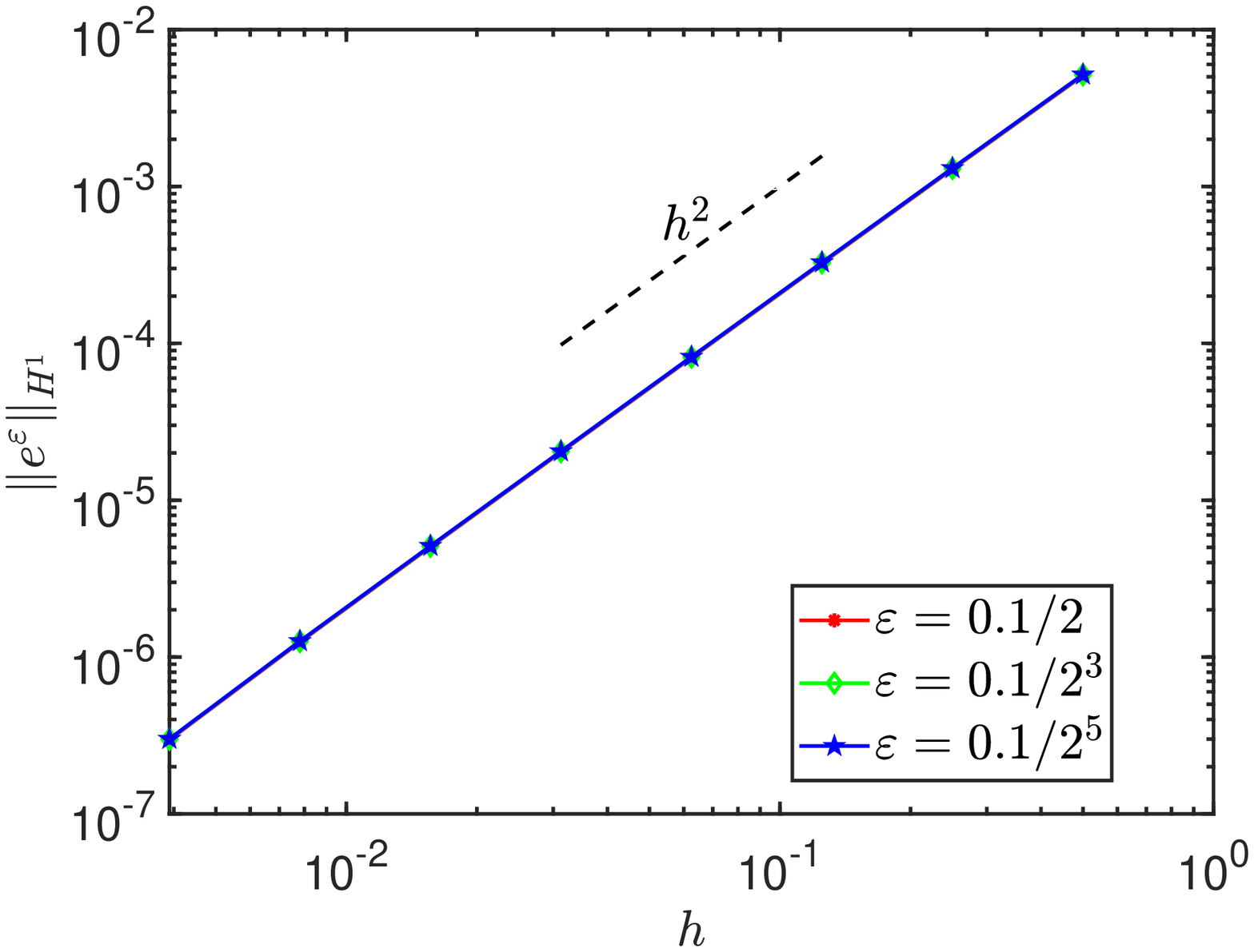}}
\caption{The spatial errors $e^{\varepsilon}(1)$ in three different norms for Example~1.}
\label{fig:regefdspaceerr}
\end{figure}
\end{center}

\begin{center}
\begin{figure}[htbp]
\centering
\subfigure{\includegraphics[width=.33\textwidth, height=0.30\textwidth]{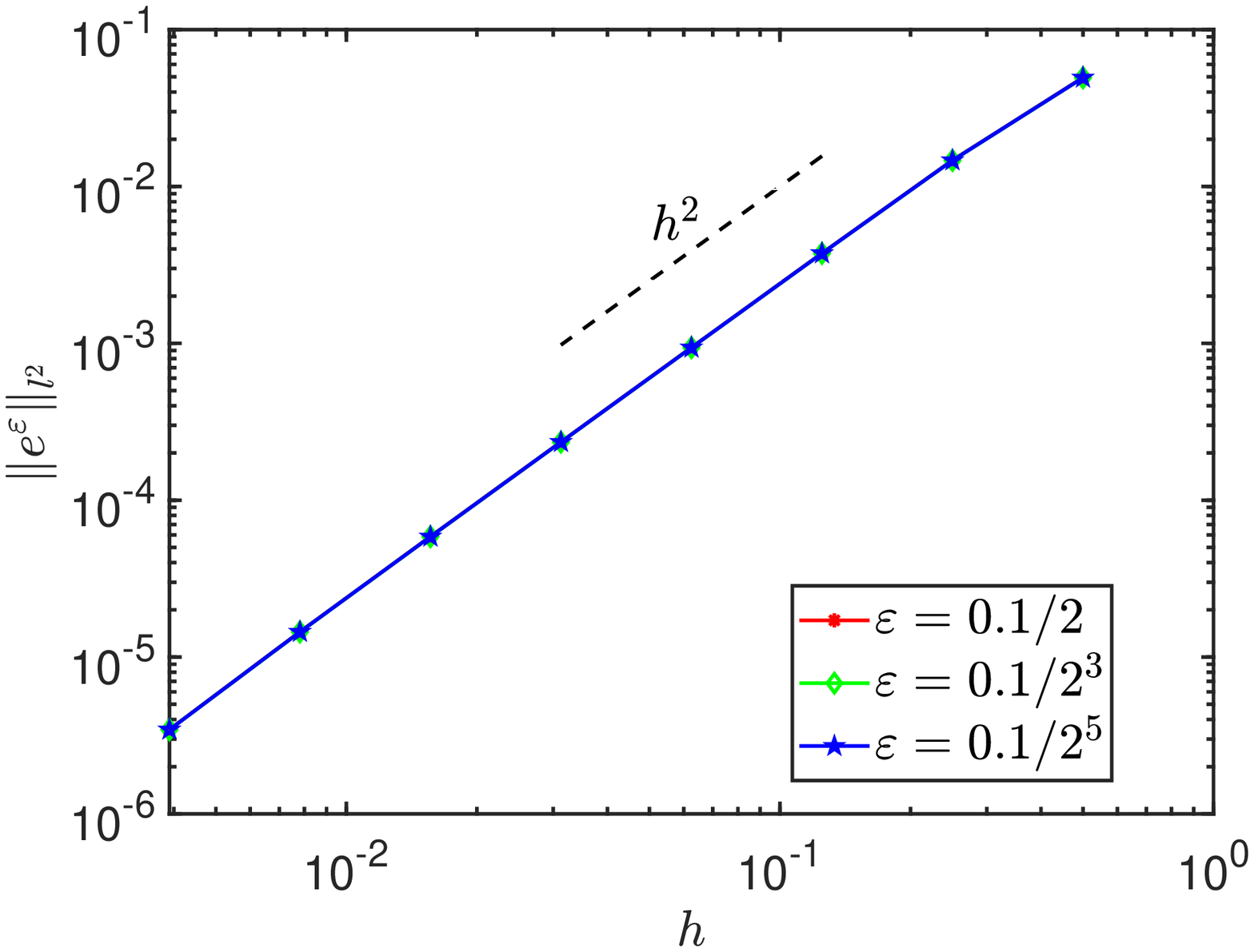}}~~~
\subfigure{\includegraphics[width=.33\textwidth, height=0.30\textwidth]{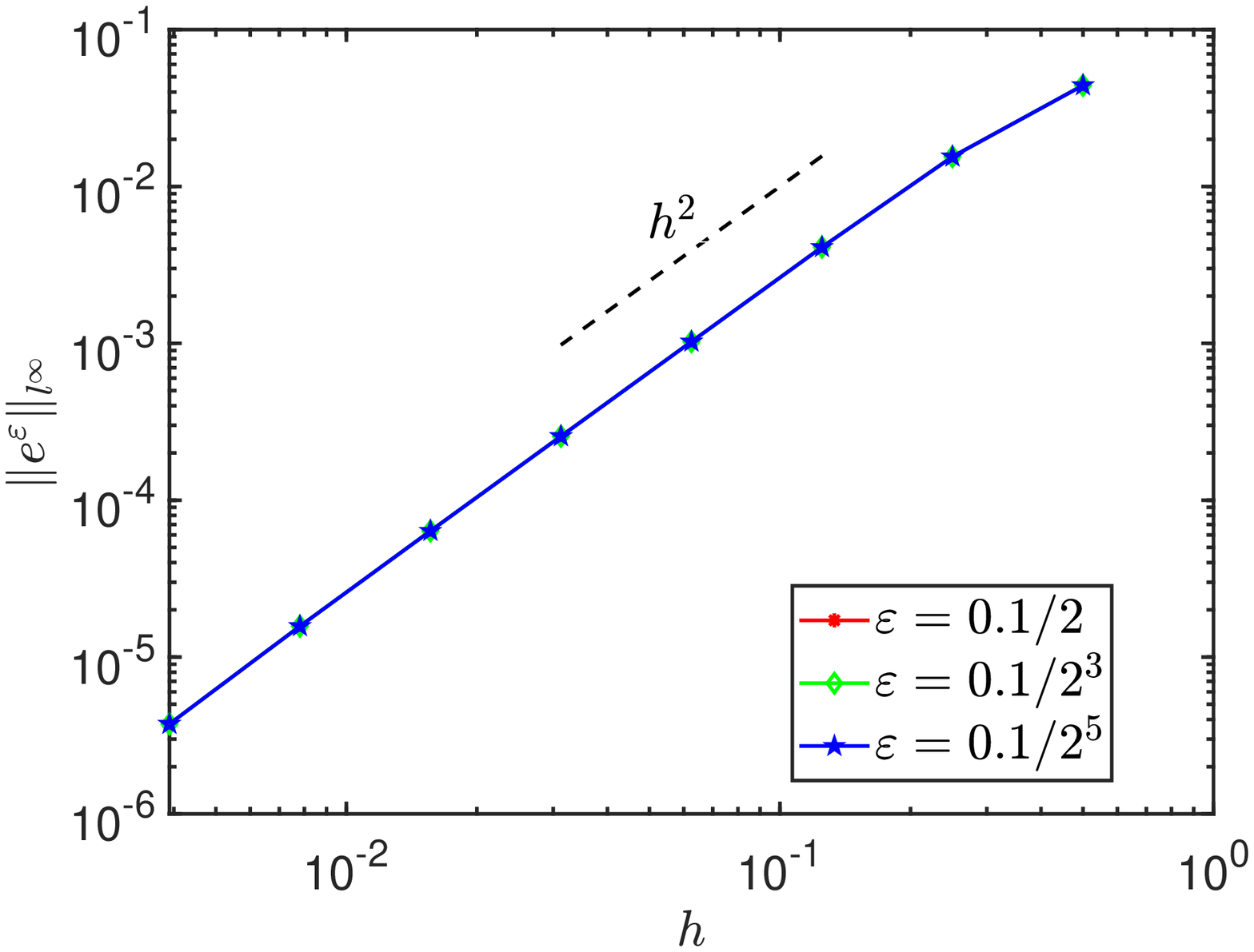}}~~~
\subfigure{\includegraphics[width=.33\textwidth, height=0.30\textwidth]{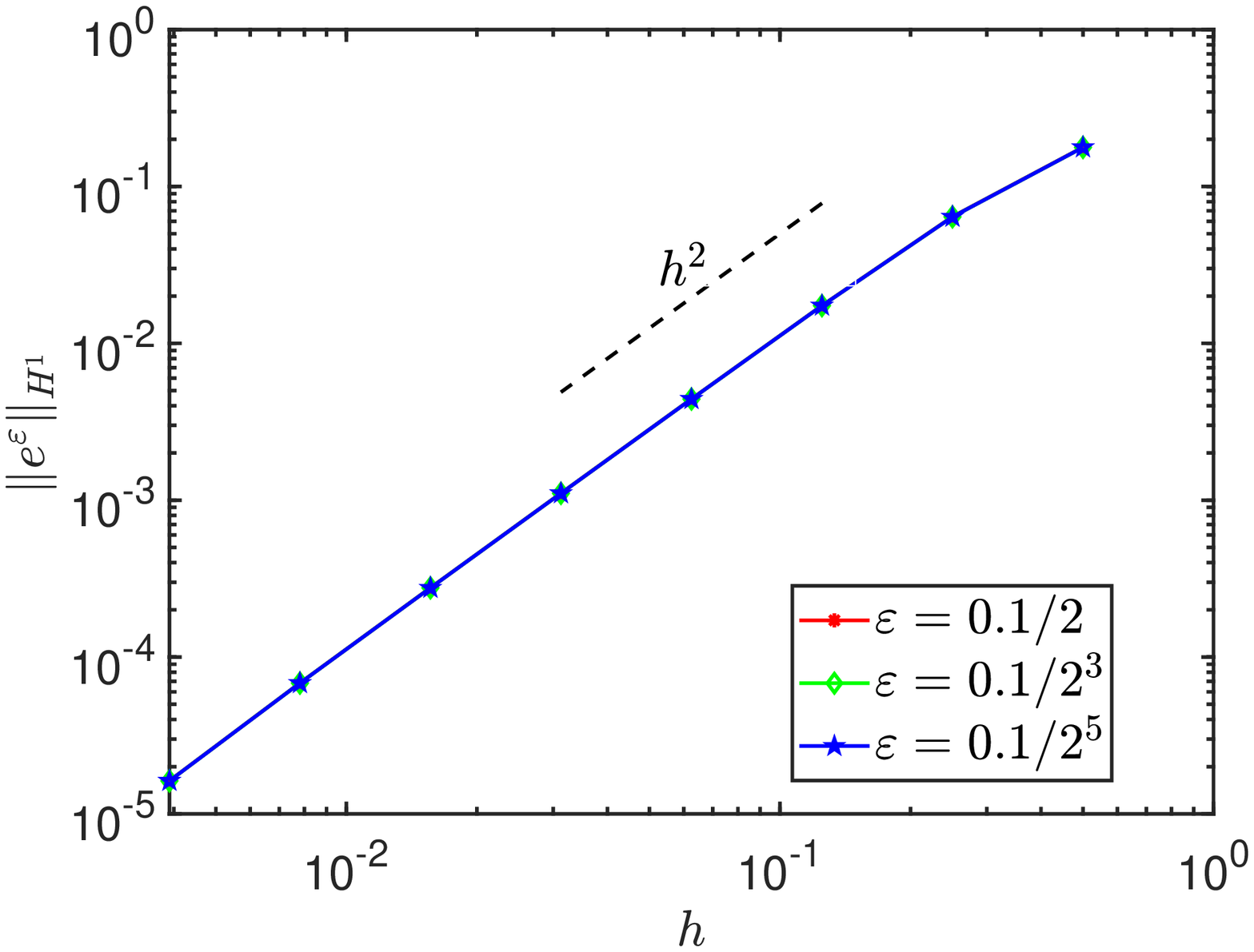}}
\caption{The spatial errors $e^{\varepsilon}(1)$ in three different norms for Example~2.}
\label{fig:regefdcase2spaceerr}
\end{figure}
\end{center}
\subsection{Convergence of FDTD to the LogKGE}
We check the convergence rates of the finite difference schemes: SIFD (\ref{SIFD2}) and EFD (\ref{EFD}) to the LogKGE (\ref{LogKGE}) for Example~1. Tables \ref{SIFD2He} and \ref{EFDHe} display $l^{2}$-norm, $l^{\infty}$-norm, $H^{1}$-norm of $\tilde{e}^{\varepsilon}(1)$, respectively, for various mesh size $h$, time step $\tau$ and parameter $\varepsilon$.

\begin{table}
\centering
\caption{The convergence of the SIFD (\ref{SIFD2}) scheme to the LogKGE (\ref{LogKGE}) with different $\tau,h,\varepsilon$}
\begin{tabular}{cccccccccccc}
\toprule  
$\|\tilde{e}^{\varepsilon} (1)\|_{l^{\infty}} $ &$  h=0.1 $ & $ h/2   $& $h/2^{2}   $ & $h/2^{3}    $& $h/2^{4}   $ &$h/2^{5}   $& \\
$ $ &$\tau=0.1$ & $\tau/2 $& $\tau/2^{2}$ & $\tau/2^{3} $& $\tau/2^{4}$ &$\tau/2^{5}$&\\
\midrule  
$\varepsilon=10^{-3} $&$\mathbf{4.03E}$-$\mathbf{03}$ &1.03E-03& 7.34E-04 &7.66E-04 & 7.74E-04 &7.76E-04\\
 rate   &--& $1.97$ & 0.48 & -0.06 &-0.02&0.00\\\hline
$\varepsilon/4$ &4.03E-03 &$\mathbf{1.03E}$-$\mathbf{03}$& 2.59E-04 &2.24E-04 & 2.28E-04 &2.29E-04\\
 rate   &--& $\mathbf{1.97}$ & $1.99$ & $0.21$ &-0.03&-0.01\\\hline
$\varepsilon/4^{2}$&4.03E-03 &1.03E-03& $\mathbf{2.59E}$-$\mathbf{04}$ &6.52E-05& 6.70E-05 &6.75E-05\\
 rate   &--& $1.97$ & $\mathbf{1.99}$ & $1.99$ &-0.04&-0.01\\\hline
$\varepsilon/4^{3}$ &4.03E-03 &1.03E-03& 2.59E-04 &$\mathbf{6.50E}$-$\mathbf{05}$ & 1.96E-05 &1.98E-05\\
 rate   &--& $1.97$ & 1.99 & $\mathbf{1.99}$ &$1.73$&-0.01\\\hline
$\varepsilon/4^{4}$ &4.03E-03 &1.03E-03& 2.59E-04  &6.50E-05 & $\mathbf{1.63E}$-$\mathbf{05}$ &5.81E-06\\
 rate   &--& $1.97$ & 1.99 & $1.99$ &$\mathbf{2.00}$&1.49\\
\toprule  
$\|\tilde{e}^{\varepsilon}(1) \|_{l^{2}} $ &$  h=0.1 $ & $ h/2   $& $h/2^{2}   $ & $h/2^{3}    $& $h/2^{4}   $ &$h/2^{5}   $& \\
$ $ &$\tau=0.1$ & $\tau/2 $& $\tau/2^{2}$ & $\tau/2^{3} $& $\tau/2^{4}$ &$\tau/2^{5}$&\\
\midrule  
$\varepsilon=10^{-3} $&$\mathbf{7.72E}$-$\mathbf{03}$ &2.23E-03& 1.30E-03 &1.24E-03 & 1.25E-03 &1.25E-03\\
 rate   &--& $1.80$ & 0.78 & $0.06$ &-0.01&0.00\\\hline
$\varepsilon/4$ &7.73E-03 &$\mathbf{1.99E}$-$\mathbf{03}$& 5.93E-04 &3.69E-04 & 3.56E-04 &3.56E-04\\
 rate   &--& $\mathbf{1.96}$ & $1.75$ & $0.68$ &0.05&0.00\\\hline
$\varepsilon/4^{2}$&7.74E-03 &1.98E-03& $\mathbf{5.07E}$-$\mathbf{04}$ &1.59E-04 & 1.05E-04 &1.02E-04\\
 rate   &--& $1.97$ & $\mathbf{1.97}$ & $1.68$ &0.59&0.05\\\hline
$\varepsilon/4^{3}$ &7.74E-03 &1.98E-03& 5.02E-04 &$\mathbf{1.29E}$-$\mathbf{04}$ & 4.25E-05 &3.01E-05\\
 rate   &--& $1.96$ & 1.98 & $\mathbf{1.96}$ &$1.60$&0.50\\\hline
$\varepsilon/4^{4}$ &7.74E-03 &1.98E-03& 5.01E-04 &1.26E-04 & $\mathbf{3.26E}$-$\mathbf{05}$ &1.14E-05\\
 rate   &--& $1.96$ & 1.98 & $1.99$ &$\mathbf{1.95}$&1.51\\
\toprule  
$\|\tilde{e}^{\varepsilon}(1) \|_{H^{1}}$ &$  h=0.1 $ & $ h/2   $& $h/2^{2}   $ & $h/2^{3}    $& $h/2^{4}   $ &$h/2^{5}   $& \\
$ $ &$\tau=0.1$ & $\tau/2 $& $\tau/2^{2}$ & $\tau/2^{3} $& $\tau/2^{4}$ &$\tau/2^{5}$&\\
\midrule  
$\varepsilon=10^{-3} $&$\mathbf{1.08E}$-$\mathbf{02}$ &3.07E-03& 1.68E-03 &1.59E-03 & 1.59E-03 &1.59E-03\\
 rate   &--& $1.81$ & 0.87 & $0.09$ &0.00&0.00\\\hline
$\varepsilon/4$ &1.08E-02 &$\mathbf{2.79E}$-$\mathbf{03}$& 8.26E-04 &4.91E-04 & 4.65E-04 &4.64E-04\\
 rate   &--& $\mathbf{1.95}$ & $1.76$ & $0.75$ &0.08&0.00\\\hline
$\varepsilon/4^{2}$&1.08E-02 &2.77E-03& $\mathbf{7.10E}$-$\mathbf{04}$ &2.21E-04 & 1.42E-04 &1.36E-04\\
 rate   &--& $1.97$ & $\mathbf{1.96}$ & $1.68$ &0.64&0.07\\\hline
$\varepsilon/4^{3}$ &1.08E-02 &2.76E-03& 6.99E-04 &$\mathbf{1.80E}$-$\mathbf{04}$ & 5.92E-05 &4.10E-05\\
 rate   &--& $1.97$ & 1.98 & $\mathbf{1.96}$ &$1.60$&0.53\\\hline
$\varepsilon/4^{4}$ &1.08E-02 &2.76E-03& 6.98E-04 &1.76E-04 & $\mathbf{4.54E}$-$\mathbf{05}$ &1.60E-05\\
 rate   &--& $1.97$ & 1.99 & $1.99$ &$\mathbf{1.95}$&1.51\\
\bottomrule 
\end{tabular}
\label{SIFD2He}
\end{table}
\begin{table}
\centering
\caption{The convergence of the EFD (\ref{EFD}) scheme to the LogKGE (\ref{LogKGE}) with different $\tau,h,\varepsilon$}
\begin{tabular}{cccccccccccc}
\toprule  
$\|\tilde{e}^{\varepsilon} (1)\|_{l^{\infty}} $ &$  h=0.1 $ & $ h/2   $& $h/2^{2}   $ & $h/2^{3}    $& $h/2^{4}   $ &$h/2^{5}   $& \\
$ $ &$\tau=0.1$ & $\tau/2 $& $\tau/2^{2}$ & $\tau/2^{3} $& $\tau/2^{4}$ &$\tau/2^{5}$&\\
\midrule  
$\varepsilon=10^{-3} $&$\mathbf{1.63E}$-$\mathbf{03}$ &6.76E-04& 7.43E-04 &7.68E-04 & 7.75E-04 &7.76E-03\\
 rate   &--& $1.27$ & -0.14 & -0.05 &-0.01&0.00\\\hline
$\varepsilon/4$ &1.70E-03 &$\mathbf{4.31E}$-$\mathbf{04}$& 2.14E-04 &2.25E-04 & 2.28E-04 &2.29E-04\\
 rate   &--& $\mathbf{1.98}$ & $1.01$ & -0.08 &-0.02&0.00\\\hline
$\varepsilon/4^{2}$&1.71E-03 &4.37E-04& $\mathbf{1.10E}$-$\mathbf{04}$ &6.58E-05 & 6.72E-05 &6.76E-05\\
 rate   &--& $1.97$ & $\mathbf{1.99}$ & $0.73$ &-0.03&-0.01\\\hline
$\varepsilon/4^{3}$ &1.71E-03 &4.37E-03& 1.10E-04 &$\mathbf{2.76E}$-$\mathbf{04}$ & 1.97E-05 &1.99E-05\\
 rate   &--& $1.96$ & 1.99 & $\mathbf{2.00}$ &$0.48$&-0.01\\\hline
$\varepsilon/4^{4}$ &1.71E-03 &4.37E-04& 1.10E-04 &2.76E-05 & $\mathbf{6.90E}$-$\mathbf{06}$ &5.82E-06\\
 rate   &--& $1.96$ & 1.99 & $2.00$ &$\mathbf{2.00}$&0.25\\
 \toprule  
 $\|\tilde{e}^{\varepsilon} (1) \|_{l^{2}}$ &$  h=0.1 $ & $ h/2   $& $h/2^{2}   $ & $h/2^{3}    $& $h/2^{4}   $ &$h/2^{5}   $& \\
$ $ &$\tau=0.1$ & $\tau/2 $& $\tau/2^{2}$ & $\tau/2^{3} $& $\tau/2^{4}$ &$\tau/2^{5}$&\\
\midrule  
$\varepsilon=10^{-3} $&$\mathbf{3.73E}$-$\mathbf{03}$ &1.43E-03& 1.23E-03 &1.24E-03 & 1.25E-03 &1.25E-03\\
 rate   &--& $1.39$ & 0.21 & -0.01 &-0.01&0.00\\\hline
$\varepsilon/4$ &3.71E-03 &$\mathbf{9.72E}$-$\mathbf{04}$& 4.06E-04 &3.55E-04 & 3.56E-04 &3.56E-04\\
 rate   &--& $\mathbf{1.93}$ & $1.26$ & $0.19$ &0.00&0.00\\\hline
$\varepsilon/4^{2}$&3.72E-03 &9.43E-04& $\mathbf{2.52E}$-$\mathbf{04}$ &1.15E-04 & 1.02E-04 &1.02E-04\\
 rate   &--& $1.98$ & $\mathbf{1.90}$ & $1.13$ &0.17&0.00\\\hline
$\varepsilon/4^{3}$ &3.72E-03 &9.42E-04& 2.38E-04 &$\mathbf{6.53E}$-$\mathbf{05}$ & 3.23E-05 &2.93E-05\\
 rate   &--& $1.98$ & 1.99 & $\mathbf{1.86}$ &$1.01$&0.14\\\hline
$\varepsilon/4^{4}$ &3.72E-03 &9.43E-04& 2.37E-04 &5.98E-05 & $\mathbf{1.69E}$-$\mathbf{05}$ &9.10E-06\\
 rate   &--& $1.98$ & 1.99 & $1.99$ &$\mathbf{1.82}$&0.89\\
 \toprule  
$\|\tilde{e}^{\varepsilon} (1)\|_{H^{1}} $ &$  h=0.1 $ & $ h/2   $& $h/2^{2}   $ & $h/2^{3}    $& $h/2^{4}   $ &$h/2^{5}   $& \\
$ $ &$\tau=0.1$ & $\tau/2 $& $\tau/2^{2}$ & $\tau/2^{3} $& $\tau/2^{4}$ &$\tau/2^{5}$&\\
\midrule  
$\varepsilon=10^{-3} $&5.05E-03 &1.92E-03& 1.59E-03 &1.59E-03 & 1.59E-03 &1.59E-03\\
 rate   &--& $1.39$ & 0.27 & $0.01$ &0.00&0.00\\\hline
$\varepsilon/4$ &$\mathbf{4.93E}$-$\mathbf{03 } $&1.31E-03& 5.51E-04&4.68E-04 &4.64E-04 & 4.64E-04 \\
 rate   &--& $1.91$ & $1.25$ & $0.23$ &0.01&0.00\\\hline
$\varepsilon/4^{2}$&4.92E-03 &$\mathbf{1.25E}$-$\mathbf{03}$& 3.39E-04 &1.56E-04 & 1.37E-04 &1.35E-04\\
 rate   &--& $\mathbf{1.98}$ & $1.88$ & $1.12$ &0.19&0.01\\\hline
$\varepsilon/4^{3}$ &4.92E-03 &1.24E-03& $\mathbf{3.14E}$-$\mathbf{04}$ &8.76E-05 & 4.12E-05 &3.98E-05\\
 rate   &--& $1.99$ & $\mathbf{1.98}$ & 1.84 &$0.99$&0.15\\\hline
$\varepsilon/4^{4}$ &4.92E-03 &1.24E-03& 3.11E-04 &$\mathbf{7.87E}$-$\mathbf{05}$ & 2.27E-05 &1.25E-05\\
 rate   &--& $1.99$ & 1.99 & $\mathbf{1.98}$ &$1.78$&0.86\\\hline
 $\varepsilon/4^{5}$ &4.92E-03 &1.24E-03& 3.11E-04 &7.79E-05 & $\mathbf{1.96E}$-$\mathbf{05}$ &5.92E-06\\
 rate   &--& $1.99$ & 2.00 & $2.00$ &$\mathbf{1.98}$&1.74\\
\bottomrule 
\end{tabular}
\label{EFDHe}
\end{table}
\subsection{The evolution of the solution}
Figure \ref{fig:efdcase2ut1} represents the numerical solutions of the EFD (\ref{EFD}) at three different time $T=1,5,9$ for Example~2. We take the step size as $\tau=0.01\times2^{-7}$, and the mesh size as $h=2^{-7}$ at the large domain $[-16,16]$. From Figure \ref{fig:efdcase2ut1}, we can see that the numerical solutions of those two schemes are very close different $\varepsilon$ at fixed times. Besides the number of wave crests increase over time. We can conclude the two discritization schemes are stable under the stability conditions.
\begin{center}
\begin{figure}[htbp]
\centering
\subfigure[$T=1$]{\includegraphics[width=.33\textwidth, height=0.30\textwidth]{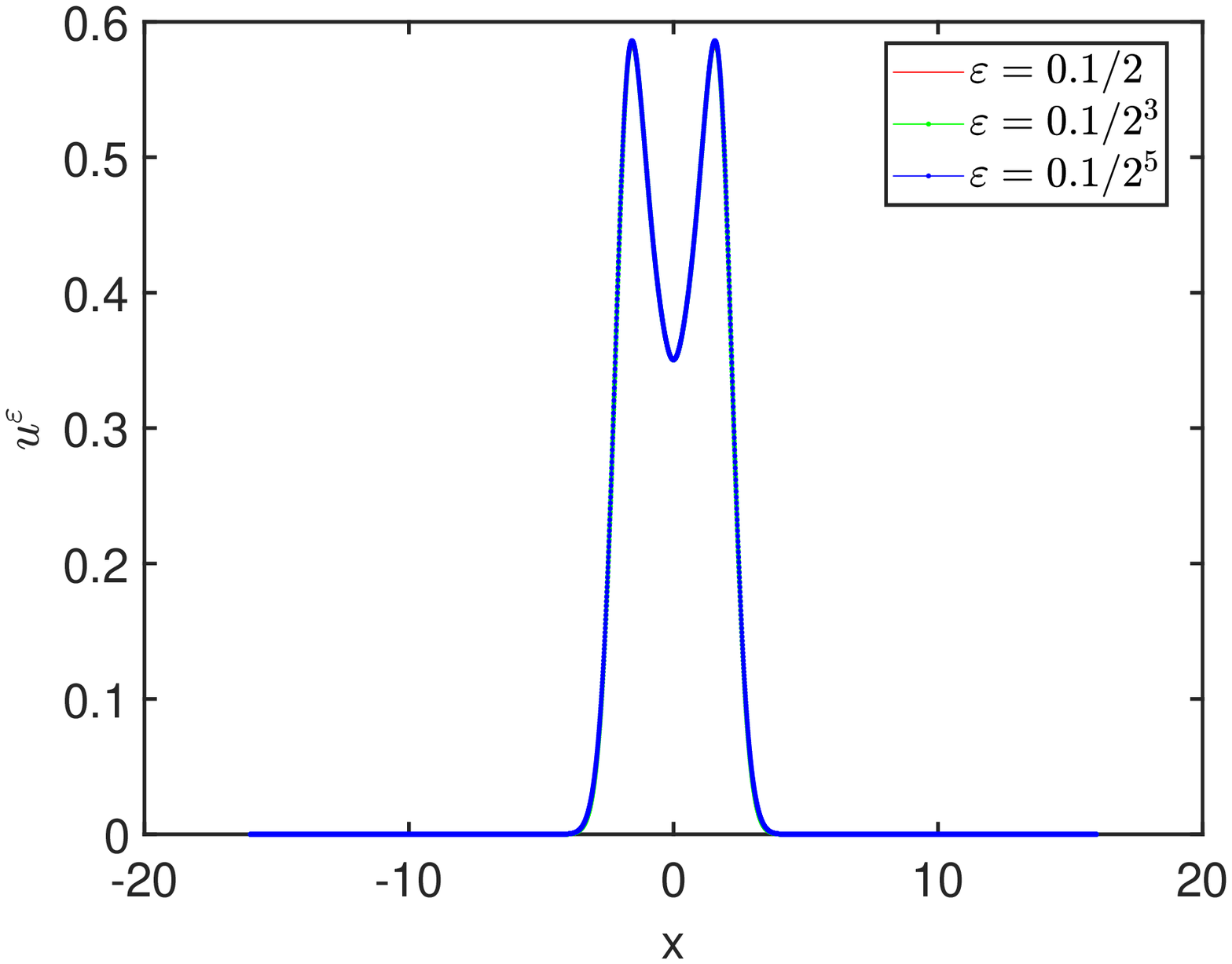}}~~
\subfigure[$T=5$]{\includegraphics[width=.33\textwidth, height=0.30\textwidth]{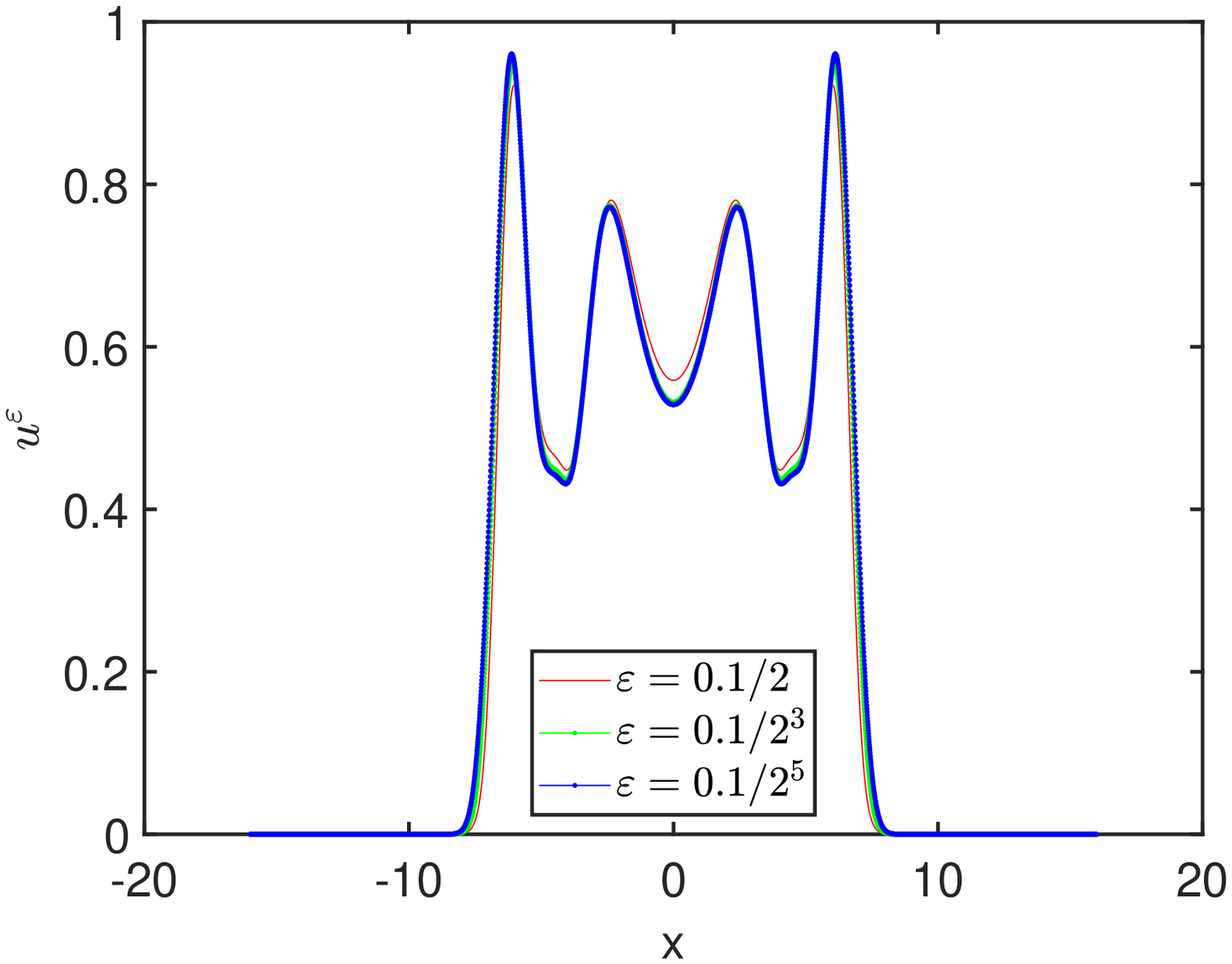}}~~
\subfigure[$T=9$]{\includegraphics[width=.33\textwidth, height=0.30\textwidth]{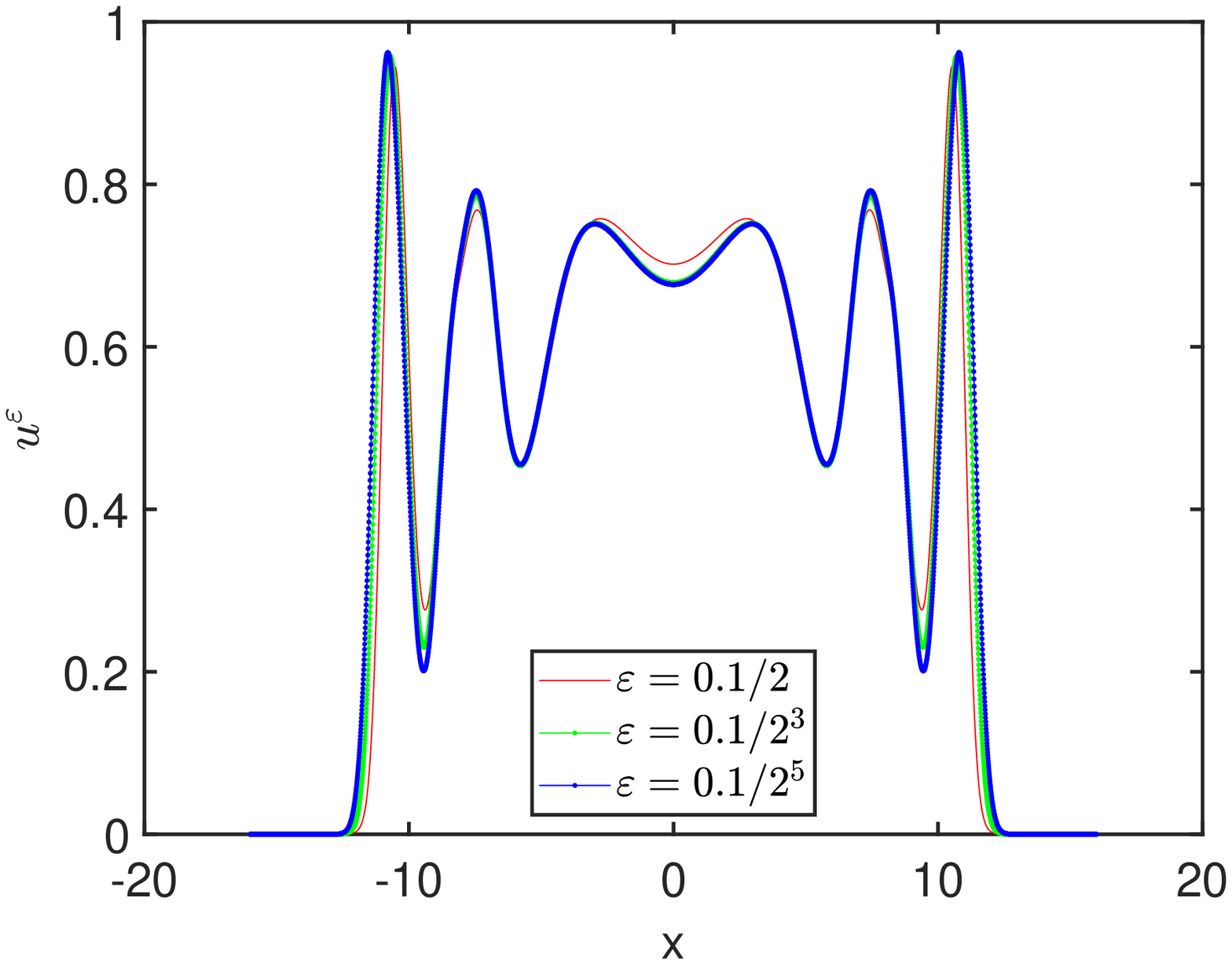}}
\caption{The numerical solution $u^{\varepsilon}$ in three different time for the scheme EFD $(\ref{EFD})$ .}
\label{fig:efdcase2ut1}
\end{figure}
\end{center}

\section{Conclusions}\label{conclusion}
To avoid the singularity of the LogKGE (\ref{LogKGE}) at the origin, we proposed the RLogKGE (\ref{RLogKGE}) with a small regularized parameter $0<\varepsilon\ll1$. Two finite difference methods: SIFD, EFD were proposed and analyzed theoretically for the RLogKGE, which showed that the orders of accuracy are all second in both space and time. Besides, The numerical results demonstrated that the solutions of the RLogKGE (\ref{RLogKGE}) are linearly convergent to the LogKGE (\ref{LogKGE}) at $O(\varepsilon)$. In addition, the error bounds of FDTD methods to the LogKGE (\ref{LogKGE}) were numerically investigated and depended on $\tau,h,\varepsilon$.
\section*{Acknowledgments}
This work is supported by the National Natural Science Foundation of China (Grant No. 11971481,11901\\577), the Natural Science Foundation of Hunan (Grant No.S2017JJQNJJ0764,  S2020JJQNJJ1615), the Basic Research Foundation of National Numerical Wind Tunnel Project (No. NNW2018-ZT4A08). Research Fund of NUDT (Grand No. ZK17-03-27,ZK19-37), and the fund from Hunan Provincial Key Laboratory of Mathematical Modeling and Analysis in Engineering (Grand No.2018MMAEZD004).
\section*{References}
\bibliographystyle{elsarticle-num}
\bibliography{rlogkg_sifd}

\end{document}